\title{Growth Results and Euclidean Ideals}
\author{Hester Graves}
\newtheorem{theorem}{Theorem}
\newtheorem{defi}{Definition}
\newtheorem{prop}{Proposition}
\newtheorem{lemm}{Lemma}
\newtheorem{coro}{Corollary}
\newcommand{\Z}{{\mathbb{Z}}}
\newcommand{\Q}{{\mathbb{Q}}}
\newcommand{\N}{{\mathbb{N}}}
\newcommand{\Cl}{{\mathrm{Cl}}}
\newcommand{\p}{{\mathfrak{p}}}
\newcommand{\q}{{\mathfrak{q}}}
\newcommand{\I}{{\mathbb{I}}}
\newcommand{\Nm}{{\mathrm{Nm}}}
\newcommand{\A}{{\mathfrak{A}}}
\renewcommand{\O}{{\EuScript{O}}}
\begin{document}
\maketitle 
\begin{abstract}  Lenstra's concept of Euclidean ideals generalizes the Euclidean algorithm; a domain with a Euclidean ideal has cyclic class group, while a domain with a Euclidean algorithm has trivial class group.  This paper generalizes Harper's variation of Motzkin's lemma to Lenstra's concept of Euclidean ideals and then uses the large sieve to obtain growth results.  It concludes that if a certain set of primes is large enough, then the ring of integers of a number field with cyclic class group has a Euclidean ideal. 
\end{abstract} 

\section{Introduction}
If a Dedekind domain $R$ has a Euclidean algorithm, then $R$ is principal and the class group is trivial.  Lenstra generalized the concept of the Euclidean algorithm from a function defined on elements to a function defined on ideals via the concept of the Euclidean ideal.  If $R$ has a Euclidean ideal $C$, then $[C]$ generates the class group of $R$ and $\Cl_R$ is cyclic.  In the case where $R$ is the ring of integers of a number field, he proved something much stronger.

\begin{theorem}\cite{Lenstra}(Lenstra 1979) Suppose $K$ is a number field, $|\O_K^{\times}| = \infty$, and $C$ is an ideal of $\O_K$.  If one assumes the generalized Riemann hypothesis, then $C$ is a Euclidean ideal if and only if $\Cl_K =\left  \langle [C] \right \rangle.$
\end{theorem}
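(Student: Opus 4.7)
The forward direction requires nothing beyond what is recalled in the introduction: the class of any Euclidean ideal generates the class group, so $(\Rightarrow)$ is immediate and I would dispose of it in a single line.

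For the converse, assume $\Cl_K=\langle[C]\rangle$, that $\O_K^{\times}$ is infinite, and GRH, and produce a Euclidean function on $C$ via an ideal-theoretic Motzkin-style transfinite construction. Build an ascending chain $\{\mathcal{E}_\alpha\}$ of sets of nonzero ideals by setting $\mathcal{E}_0=\{C\}$ and adding to $\mathcal{E}_{\alpha+1}$ every ideal $I$ such that each nontrivial residue of $IC^{-1}$ modulo $\O_K$ can be reduced, by subtracting an element of $\O_K$, to an ideal already appearing in some $\mathcal{E}_\beta$ with $\beta\le\alpha$. Then $\psi(I)=\min\{\alpha:I\in\mathcal{E}_\alpha\}$ is a Euclidean function on $C$, provided the ``bad set'' $\mathcal{B}$ of ideals never appearing in this chain is empty. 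A divisibility argument of Motzkin type---whose ideal generalization this paper develops in a later section---shows that any minimal element of $\mathcal{B}$ must be a prime ideal $\p$ lying in the class $[C]^{-1}$ for which the natural map $\O_K^{\times}\to(\O_K/\p)^{\times}$ fails to be surjective.

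So it suffices to exhibit primes $\p$ in the class $[C]^{-1}$ whose residue field is multiplicatively generated by the images of the global units. This is where GRH enters, via an adaptation of Weinberger's 1973 density argument. For each rational prime $\ell$, form the Galois extension $L_\ell=H\cdot K(\zeta_\ell,u_1^{1/\ell},\ldots,u_r^{1/\ell})$, where $H$ is the Hilbert class field of $K$ and $u_1,\ldots,u_r$ are fundamental units of $\O_K$, and apply effective Chebotarev under GRH inside $L_\ell/K$ to count primes $\p$ whose Frobenius in $H$ equals $[C]^{-1}$ and for which no $u_i$ becomes an $\ell$-th power modulo $\p$. A Hooley-style inclusion-exclusion over $\ell$ then delivers a positive density of such primes. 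The main obstacle is controlling the tail of this sum: one needs the power-saving error terms of effective Chebotarev (available only under GRH) to survive summation over all $\ell$, and must verify that the intersections $H\cap K(\zeta_\ell,u_i^{1/\ell})$ are small enough that the Hilbert-class-field density and the Kummer densities multiply cleanly. Once this is done, the assumption $\mathcal{B}\ne\emptyset$ is contradicted, and $C$ is Euclidean.
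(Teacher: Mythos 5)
The paper does not prove this statement: it is cited verbatim from Lenstra's 1979 \emph{Ast\'erisque} article, and the machinery in Sections 2--6 is built specifically to \emph{avoid} GRH (culminating in Theorems \ref{joint functions} and \ref{main result}). So there is no in-paper proof to compare against; I am evaluating your argument on its own.

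Your overall plan---Motzkin-type construction, reduction to a statement about primes, then GRH via a Weinberger/Hooley compositum argument---is the right flavor, and the forward direction is indeed immediate. However, the central reduction step has a genuine gap. You claim that a Motzkin-type divisibility argument shows \emph{any minimal element of} $\mathcal{B}$ \emph{is a prime} $\p$ \emph{in the class} $[C]^{-1}$ \emph{with} $\O_K^\times \to (\O_K/\p)^\times$ \emph{not surjective}. That characterization is false. Surjectivity of the units onto the residue field characterizes membership at only the \emph{first} nontrivial level of the hierarchy (this is exactly what Corollary \ref{main coro} establishes: $\p^{-1}\in B_{1,C}$ iff $[\p]=[C]$ and the units surject). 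A prime $\p$ with $[\p]=[C^2]$ cannot possibly be at level one, regardless of what the units do mod $\p$; whether it enters at level two depends instead on whether each nonzero residue of $\p^{-1}C$ can be steered (by subtracting something in $C$) onto a \emph{level-one} prime inverse. Such a $\p$ can fail to be in $B_{2,C}$ even if $\O_K^\times \twoheadrightarrow (\O_K/\p)^\times$, because the obstruction lives at the auxiliary primes needed for the reduction, not at $\p$ itself. Consequently, exhibiting a positive density of primes with surjective units in a single class does not contradict $\mathcal{B}\neq\emptyset$; there is no single ``bad prime'' whose existence you can rule out.

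What is actually needed is an iterated (bootstrapping) argument, and this is where the real work sits. Roughly: (i) under GRH, by the Hooley/Weinberger compositum argument, for every modulus $\q$ and every coprime residue class there is a positive density of primes $\p$ with $[\p]=[C]$, $\p$ in that residue class, \emph{and} $\O_K^\times \twoheadrightarrow (\O_K/\p)^\times$ --- i.e., effective Chebotarev with good error, simultaneously in the ray class field and the Kummer tower; (ii) this fills $B_{1,C}$ sufficiently to force \emph{every} prime in $[C^2]$ into $B_{2,C}$, and iterating through the cyclic class group shows every prime eventually appears; (iii) Theorem \ref{joint functions} (which uses the $\omega$-ordering on prime factorization plus Lemma \ref{Dirichlet ideal class}) then lifts from primes to all ideals. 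Steps (ii)--(iii) are precisely what replace your ``minimal element'' claim; without them, the proof does not close. Your GRH input in step (i) is on the right track, but you need it with uniformity in the modulus, and you then need the inductive climb through the levels of $B_{i,C}$, not a one-shot contradiction.
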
 
In this paper, we will try to prove the above theorem in certain situations without using the Riemann hypothesis. We will do this by generalizing the machinery Harper used in his dissertation to study Euclidean rings. Our first result is the following.

\begin{theorem} \label{biquadratic} Suppose $K$ is a number field, $|\O_K^{\times}| = \infty$, and $C$ is an ideal of $\O_K.$  If $[C]$ generates the class group of $K$ and 
$$\{ \text{primes } \p \in \O_K | [\p] = [C], \O_K^{\times} \twoheadrightarrow (\O_K/ \p)^{\times} \} \gg \frac{x}{\log^2 x},$$
then $C$ is a Euclidean ideal.
\end{theorem}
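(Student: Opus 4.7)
The strategy mirrors Harper's treatment of Euclidean rings in two steps: first generalize Motzkin's lemma from Euclidean algorithms to Euclidean ideals, and then feed the density hypothesis into this generalization through the large sieve.

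For the first step, I would set up a Motzkin-style transfinite recursion on the set of nonzero fractional ideals of $\O_K$. Define $A_0$ to consist of the smallest class of ideals (those on which Lenstra's $\psi$ is forced to take its minimum, in a suitable scaling determined by $C$), and inductively put a fractional ideal $I$ into $A_{i+1}$ when, for every $x \in IC$, there exists $y \in I$ with $(x-y)C^{-1}$ already in $\bigcup_{j \le i} A_j$. Unwinding Lenstra's definition shows that $C$ is a Euclidean ideal if and only if $\bigcup_i A_i$ is the set of all nonzero fractional ideals of $\O_K$. The hypothesis that $[C]$ generates $\Cl_K$ is essential here: it ensures that the $\psi$-values produced by the recursion cover all ideal classes, so that the recursion has a hope of exhausting the whole ideal monoid rather than just one class.

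For the second step, the hypothesis supplies many admissible primes. A prime $\p$ with $[\p]=[C]$ satisfies $[\p C^{-1}]=1$, and the surjectivity $\O_K^{\times}\twoheadrightarrow (\O_K/\p)^{\times}$ guarantees that every residue class modulo $\p$ has a unit representative; after translating by the principal generator of $\p C^{-1}$, this places $\p$ in $A_1$. Harper's large-sieve argument then carries over: the lower bound $\gg x/\log^2 x$ on the count of admissible primes is exactly what is needed so that products of admissible primes in $[C]$ cover, up to an acceptable sieve error, all integral ideals of norm at most $X$. Iterated reduction then lifts those products up through the hierarchy $\{A_i\}$ until we reach every fractional ideal.

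The genuinely new work, and the main obstacle, is the first step. Formulating the Motzkin-type recursion so that it interacts correctly with a nontrivial class group is subtle, because the classical lemma lives in a principal setting where ``reducing modulo $x$'' never leaves the class of principal ideals; here one must track the class of $(x-y)C^{-1}$ carefully and verify that the recursion is indeed equivalent to Lenstra's definition. Once this generalized Motzkin criterion is in place, the large-sieve application should follow Harper's blueprint essentially verbatim, with admissible primes in the class $[C]$ replacing rational primes and the hypothesis $|\O_K^{\times}|=\infty$ guaranteeing the infinite supply of unit representatives that the sieve consumes.
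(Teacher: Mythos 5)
Your broad outline is the right one: a Motzkin-type recursion on ideals, a ``first layer'' identified with primes $\p$ in $[C]$ where the units surject onto $(\O_K/\p)^{\times}$, and a large-sieve argument to propagate the density hypothesis upward. But several load-bearing details are off or missing, and they are precisely the places where this generalization requires real work.

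First, the recursion itself is garbled. To put $I$ in $A_{i}$ the paper requires that for every $x\in IC\setminus C$ there be some $y\in C$ (not $y\in I$) with $(x+y)^{-1}IC \in A_{i-1,C}$. Your expression $(x-y)C^{-1}$ with $y\in I$ does not produce an ideal in $E$ (nor even an integral or fractional ideal comparable to $I$): if $x\in IC$ and $y\in I$ then $x-y$ need not lie in $IC$ unless $C=R$, so the construction breaks at the first step. Getting this wrong undermines the whole inductive framework, since the point of the condition $y\in C$ is exactly to guarantee that $(x+y)^{-1}IC$ is again in $E$ and that $x+y\notin C$.

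Second, the role of $|\O_K^{\times}|=\infty$ is not ``supplying unit representatives the sieve consumes''; it is the input to the Gupta--Murty bound, which guarantees that $\#\{\p:\Nm\p\le x,\; f(\p)\le \Nm(\p)^{1/2-\epsilon}\}=o(x/\log x)$. Without that bound the sieve inequality
$$\sum_{\p\in P}\frac{\omega(\p)}{\Nm(\p)}\ll\log^2 x$$
cannot be converted into the asymptotic $|B_{2,C}(x)|\sim x/(h_K\log x)$, because a priori $\omega(\p)$ could be small for most $\p$. This is a genuinely essential ingredient that your sketch omits.

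Third, and most substantially, the passage from ``many primes in the first layer'' to ``every fractional ideal'' is not a direct covering by products of admissible primes. The paper's argument is a two-stage bootstrap followed by a separate multiplicative reduction: (i) the large sieve plus Gupta--Murty lift $|B_{1,C}(x)|\gg x/\log^2 x$ to $|B_{2,C}(x)|\sim x/(h_K\log x)$, i.e.\ full density among primes in $[C]$; (ii) Chebotarev (via the ray class field of modulus $I^{-1}$) shows that every prime $\p^{-1}$ with $[\p]=[C^{n+2}]$ lands in $B_{3,C}$, since a positive-density set of $\q$ must intersect the full-density set $B_{2,C}$; (iii) one then needs a separate argument to pass from ``all primes $\p^{-1}\in B_C$'' to ``all of $E$,'' which the paper carries out by constructing the additive, lexicographically ordered function $\phi(I)=(\omega(I),\lambda(I))$ and using Chebotarev a second time to handle $x$ with $(x,C)=IC$. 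Your phrase ``iterated reduction lifts those products up through the hierarchy'' glosses over (ii) and (iii) entirely; in particular (iii) is not an iteration of the recursion, but a new induction on the number of prime factors of $I^{-1}$, using the factorization $I = (IL)\cdot L^{-1}$ when $(x,C)\ne IC$. That multiplicative step is the real content of going beyond primes, and nothing in your sketch supplies it.
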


\section{Euclidean Ideals}

\textbf{Notation:} Given a Dedekind domain $R$, we define $E:=\{\text{ideals } I: R \subset I\}.$  In other words, $E$ is the set of fractional ideals that contain $R$. 

Given a number field $K$, we denote its class group by $\Cl_K$, its class number by $h_K$, and its conductor by $f(K).$

\begin{defi}(\cite{Lenstra}) Suppose $R$ is a Dedekind domain.  If $C$ is an ideal of $R$, it is called $\textit{Euclidean}$ if there exists a function $\psi: E \longrightarrow W$, $W$ a well-ordered set, such that for all $I \in E$ and all $x \in IC \setminus C$, there exists some $y \in C$ such that $$\psi((x +y )^{-1} I C) < \psi(I).$$  We say $\psi$ is a $\textit{Euclidean algorithm for  } C$ and $C$ is a \textit{Euclidean ideal}.
\end{defi}

\begin{prop}(\cite{Lenstra}) If $R$ is a Dedekind domain with a Euclidean ideal $C$, then $[C]$ generates the class group of $R$.
\end{prop}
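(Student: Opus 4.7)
The plan is to run a minimal-counterexample argument using the well-ordered set $W$. First, I would observe that every class in $\Cl_R$ has a representative in $E$: given $[J] \in \Cl_R$, choose an integral ideal $\mathfrak{a}$ with $[\mathfrak{a}] = [J]^{-1}$; then $\mathfrak{a}^{-1} \in E$ and $[\mathfrak{a}^{-1}] = [J]$. Suppose, toward contradiction, that some class lies outside $\langle [C] \rangle$. Among all $I \in E$ whose class is not in $\langle [C] \rangle$, pick one with $\psi(I)$ minimal; such an $I$ exists since $W$ is well-ordered.

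Second, I would rule out $IC \subseteq C$: that inclusion would force $I \subseteq R$, and combined with $R \subseteq I$ would give $I = R$, whose class is trivial and hence in $\langle [C] \rangle$. So I can choose some $x \in IC \setminus C$, and by the Euclidean property there is $y \in C$ with $\psi((x+y)^{-1}IC) < \psi(I)$. Note $x+y \neq 0$ (else $y = -x \in C$, contradicting $x \notin C$) and $x+y \in IC$ (since $y \in C \subseteq IC$), so $I' := (x+y)^{-1}IC$ satisfies $R \subseteq I'$, that is, $I' \in E$. Moreover, $[I'] = [I][C]$ in $\Cl_R$, since $(x+y)$ is principal.

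By minimality of $\psi(I)$, the class $[I']=[I][C]$ must lie in $\langle [C] \rangle$, which forces $[I] = [I'][C]^{-1} \in \langle [C] \rangle$ as well, contradicting the choice of $I$. Hence every class is a power of $[C]$. The only real obstacle is the bookkeeping around the set $E$: one must check that every class in $\Cl_R$ has a representative inside $E$ and that the descent step $I \mapsto I'$ stays inside $E$; both verifications are short, and the well-ordering of $W$ does all of the substantive work.
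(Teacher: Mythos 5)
Your minimal-counterexample argument is correct: every step checks out — the existence of a representative of each class in $E$, the non-vacuity of $IC \setminus C$ once you rule out $I = R$, the verification that $I' := (x+y)^{-1}IC$ lies in $E$, and the class relation $[I'] = [I][C]$ from principality of $(x+y)$. The paper itself states this proposition with only a citation to Lenstra and gives no proof, and your descent via the Euclidean algorithm with the well-ordering of $W$ is precisely the standard argument from Lenstra's original paper.
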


\begin{prop}(\cite{Lenstra}) If $K$ is an imaginary quadratic number field and the ring of integers $\O_K$ has a Euclidean ideal, then $K$ is either $\Q(i)$, $\Q(\sqrt{-2})$, $\Q(\sqrt{-3})$, $\Q(\sqrt{-5})$, $\Q(\sqrt{-7})$,  $\Q(\sqrt{-11})$, or $\Q(\sqrt{-15})$.
\end{prop}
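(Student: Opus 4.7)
The plan is to adapt Motzkin's classical lemma to the Euclidean ideal setting and then exploit the finiteness of $\O_K^\times$ for imaginary quadratic $K$ to bound sharply the norm of a distinguished prime.

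First I would prove a Motzkin-type criterion for Euclidean ideals. Let $C$ be a Euclidean ideal of $\O_K$ with Euclidean function $\psi : E \to W$. Choose an element of minimal $\psi$-value among those $I \in E$ with $I \neq R$. The goal is to show that such an $I$ has the form $\p^{-1}$ for a prime ideal $\p$ with $[\p]=[C]$, and that the reduction $\O_K^\times \to (\O_K/\p)^\times$ is surjective. The argument mimics Motzkin's: for any $x \in IC \setminus C$ one produces $y \in C$ with $\psi((x+y)^{-1}IC) < \psi(I)$, and by minimality this forces $(x+y)^{-1}IC = R$, yielding both the ideal-theoretic shape of $I$ and, after unpacking residues, the desired surjectivity.

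Second, I would specialize to the imaginary quadratic case. Here $|\O_K^\times| \in \{2,4,6\}$, with $6$ attained only by $\Q(\sqrt{-3})$ and $4$ only by $\Q(i)$. Surjectivity of $\O_K^\times \twoheadrightarrow (\O_K/\p)^\times$ therefore forces
\[
\Nm(\p) - 1 = |(\O_K/\p)^\times| \leq |\O_K^\times| \leq 6,
\]
so $\p$ must lie over one of the rational primes $2,3,5,7$. In particular, only primes of very small norm can play the role of the Motzkin prime.

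Third, I would run a finite case analysis. For each imaginary quadratic $K$ enumerate the primes of $\O_K$ of norm at most $7$, determine their ideal classes, and test whether any of them both generates $\Cl_K$ and admits a surjection from $\O_K^\times$. The Minkowski bound shows that for $K$ of sufficiently large discriminant either no such prime exists or it fails to generate $\Cl_K$, leaving only finitely many fields to examine by hand. The survivors are precisely the seven listed: the five fields with $h_K=1$ (where one takes $C = \O_K$ and recovers Motzkin's classification of imaginary quadratic Euclidean domains), together with the two class-number-two fields $\Q(\sqrt{-5})$ and $\Q(\sqrt{-15})$, in which $[C]$ is the unique nontrivial class. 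The main obstacle is the first step: translating Motzkin's elementwise minimum argument into the language of fractional ideals in $E$, and extracting simultaneously the primality of $\p$, the identity $[\p]=[C]$, and the unit surjection, all from a single minimality hypothesis. Once that lemma is in hand, the remaining steps reduce to short arithmetic with known class groups and residue fields.
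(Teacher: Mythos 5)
The paper cites this proposition from Lenstra without giving a proof, so there is no in-paper argument to compare against; I will therefore assess the proposal on its own merits.

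Your first two steps are sound: the Motzkin-type minimality argument does show that if $C$ is a Euclidean ideal, then the minimal $\psi$-value fractional ideal $I \neq R$ must be $\p^{-1}$ for a prime $\p$ with $[\p]=[C]$ and $\O_K^\times \twoheadrightarrow (\O_K/\p)^\times$, and the imaginary quadratic unit bound then forces $\Nm(\p) \leq |\O_K^\times|+1 \leq 7$ (indeed $\leq 3$ outside $\Q(i)$ and $\Q(\sqrt{-3})$). The problem is step three. This level-one necessary condition is satisfied by fields that are \emph{not} on Lenstra's list, so the finite check cannot yield exactly the seven fields. For example, in $\Q(\sqrt{-6})$ one has $h_K=2$, $\O_K^\times=\{\pm 1\}$, and the ramified prime $\p_2=(2,\sqrt{-6})$ is non-principal of norm $2$ with $(\O_K/\p_2)^\times$ trivial, so taking $C=\p_2$ satisfies all of your level-one constraints; the same is true for $\Q(\sqrt{-10})$ and $\Q(\sqrt{-22})$ via their ramified prime over $2$, and for the class-number-three fields $\Q(\sqrt{-23})$ and $\Q(\sqrt{-31})$ via the split prime over $2$, which generates the class group. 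None of these appear in the proposition. Lenstra's actual proof must therefore exploit a deeper obstruction than the existence of a single small prime with a unit surjection: the Motzkin construction applied past the first level imposes compatibility conditions on \emph{all} ideals of small inverse norm (roughly, each nonzero coset of $C$ in $\mathfrak{a}^{-1}C$ must contain a generator of some $\mathfrak{b}^{-1}C$ with $\mathfrak{b}^{-1}$ already settled), and the finiteness of $\O_K^\times$ bounds how many cosets can be resolved per step; pushed far enough this yields a bound on the discriminant.

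Your appeal to the Minkowski bound also does not do the work you want. The Minkowski bound controls the smallest norm of an ideal in each class, not whether a prime of norm at most $3$ exists and generates the class group; the latter condition can be met for arbitrarily large $|\Delta_K|$ (take any $\Delta_K$ with $2$ split or ramified and $\Cl_K$ cyclic with the degree-$2$ prime a generator, which is not ruled out by discriminant size alone). So even granting the necessary condition, the case analysis is not \emph{a priori} finite. To salvage the approach you would need the sharper Motzkin-plus-geometry-of-numbers argument that bounds $|\Delta_K|$ directly, which is the genuinely hard part of Lenstra's proof and is absent from the proposal.
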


\begin{defi}(\textbf{A Motzkin-type Construction for Ideals})

Given a Dedekind domain $R$ and some non-zero ideal $C$, we define 

$A_{0,C}:=  \{R\}, $

$A_{i,C}:= A_{i-1,C}\cup\left\{ I \left|\
  \begin{array}{c} I \in E \text{ and }
    \forall x\in IC\setminus C,\ \exists ~ y \in C
    \\ \mbox{such that}\ (x+y)^{-1}IC\in A_{i-1,C}
  \end{array}
  \! \!  \right.\right\}  \text{ for } i >0, $

$\text{and }A_C:= \cup_{i=0}^{\infty} A_{i,C}.$

\end{defi}  \index{$A_{i-1,C}$} \index{$A_C$} \index{ Motzkin's Construction for $C$} \index{$\phi_C$}
Note that the $A_{i,C}$'s are nested.

\begin{lemm}{{\textbf{(A Motzkin-type Lemma for ideals)}}} \label{modified motzkin's lemma I} 
\newline Suppose $R$ is a Dedekind domain and $C$ is a non-zero ideal. If the sets $A_C$ and $E$ are equal, then $C$ is a Euclidean ideal.  
\end{lemm}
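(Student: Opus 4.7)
The strategy mirrors Motzkin's classical argument: I plan to use the filtration $(A_{i,C})_{i \geq 0}$ itself to manufacture a Euclidean algorithm. Specifically, take $W = \mathbb{Z}_{\geq 0}$ with its usual ordering, and define
$$\psi: E \longrightarrow \mathbb{Z}_{\geq 0}, \qquad \psi(I) := \min\{i \geq 0 : I \in A_{i,C}\}.$$
The hypothesis $A_C = E$ together with $A_C = \bigcup_i A_{i,C}$ guarantees that the set on the right is non-empty for every $I \in E$, and well-ordering of $\mathbb{Z}_{\geq 0}$ then makes $\psi$ well-defined.

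Next, I would verify that $\psi$ satisfies the defining inequality for a Euclidean algorithm on $C$. Fix $I \in E$ and let $i := \psi(I)$. There are two cases. If $i = 0$, then $I \in A_{0,C} = \{R\}$, so $I = R$ and $IC = C$; then $IC \setminus C = \emptyset$, so the quantifier over $x$ is vacuous and nothing needs to be checked. If $i \geq 1$, then $I \in A_{i,C} \setminus A_{i-1,C}$, and since $I$ is not in the base layer, it must have entered $A_{i,C}$ through the recursive clause in the definition. That clause provides exactly what is needed: for every $x \in IC \setminus C$ there is some $y \in C$ with $(x+y)^{-1} IC \in A_{i-1,C}$, whence
$$\psi\bigl((x+y)^{-1} I C\bigr) \leq i - 1 < i = \psi(I).$$
This is precisely the Euclidean condition, so $\psi$ witnesses that $C$ is a Euclidean ideal.

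There is essentially no real obstacle here; the construction is a direct transcription of Motzkin's classical argument (where one stratifies $R \setminus \{0\}$ by successively adjoining elements for which the division step lands in a lower stratum) to the setting where the Euclidean function is defined on fractional ideals containing $R$ and the ``remainder'' step is $x \mapsto x + y$ with the resulting ideal rescaled by $(x+y)^{-1}$. The only subtlety is keeping track of the base case $I = R$, which is handled automatically by the fact that $IC \setminus C$ is empty there, and the bookkeeping that the nested structure of the $A_{i,C}$ makes $\psi$ take values in a well-ordered set.
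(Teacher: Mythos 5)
Your proof is correct and is essentially the same as the paper's: the function you call $\psi$ is exactly the paper's $\phi_C$ (defined there by $\phi_C(I)=i$ if $I\in A_{i,C}\setminus A_{i-1,C}$, which coincides with your $\min$ formulation since the $A_{i,C}$ are nested), and the verification via the recursive clause of the construction is the same. The only cosmetic difference is that you explicitly handle the base case $I=R$, which the paper leaves implicit.
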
\index{Motzkin's Lemma for $C$}
\begin{proof}  We shall define $\phi_C: E \longrightarrow \N$ by $\phi_C(I) =i$ if $I \in A_{i,C} \setminus A_{i-1,C}.$ We will prove that $C$ is Euclidean by showing that $\phi_C$ is a Euclidean algorithm for $C$. Suppose that $I$ is an ideal in $E$ and that $x$ is an element in $IC \setminus C.$  Since the ideal $I$ is in $A_C$, there exists some $y$ in $C$ such that $(x+y)^{-1}IC$ is an element of $A_{\phi_C(I)-1, C}.$  We conclude that there exists some $y$ in $C$ such that 
$$\phi_C((x+y)^{-1}IC) \leq \phi_C(I)-1 < \phi_C(I).$$
The function $\phi_C$ is thus a Euclidean algorithm for $C$ and $C$ is a Euclidean ideal.
\end{proof}

\subsection{Properties of $IC/C$} 
\begin{defi}Suppose that $C$ is a non-zero ideal, that $I$ and $J$ are ideals in $E$, and that $\alpha$ is an element of $IC \setminus C.$  The ideal $J$ is \textit{similar to $\alpha$ modulo $IC$ and $C$}, or $J \sim \alpha \pmod{IC, C}$, if $J$ can be written as $ (\alpha + y)^{-1}IC $ for some $y $ in $C.$
\end{defi}

\begin{lemm}  Suppose that $R$ is a Dedekind domain, that $C$ is a non-zero ideal,  and that $I$ is an ideal in $E$.  If $x$ is an element of $IC$, then $\overline{x}$ generates $IC/C$ as an $R/I$ module if and only if $(x,C) = IC.$  Furthermore, if $(x,C)=IC$ and $\phi$ is any $R$-isomorphism from $(IC/C)^{-1}$ to $R/I^{-1}$, then $[\phi(x)]$ is a unit in $R/ I^{-1}.$ 
\end{lemm}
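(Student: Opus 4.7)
The plan is to set up a dictionary between ``$\overline{x}$ generates $IC/C$ as a cyclic module'' and ``$x$ and $C$ together generate the ideal $IC$,'' and then to deduce the \emph{Furthermore} clause by transporting a cyclic generator through $\phi$.

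As a setup step, I would note that $I^{-1}\cdot IC = I^{-1}I\cdot C = C$, so $IC/C$ is annihilated by the integral ideal $I^{-1}$ and its $R$-module structure factors through $R/I^{-1}$; in particular, cyclic generators over $R$ and over $R/I^{-1}$ coincide. For the biconditional itself, both directions should reduce to unwinding definitions. If $\overline{x}$ generates $IC/C$, then for every $z\in IC$ there is some $r\in R$ with $z-rx\in C$, so $z\in xR+C=(x,C)$; combined with the trivial inclusion $(x,C)\subset IC$ (valid because $x\in IC$ and $C\subset IC$), this yields $(x,C)=IC$. Conversely, if $(x,C)=IC$, then every $z\in IC$ admits a decomposition $z=rx+c$ with $r\in R$ and $c\in C$, which reduces modulo $C$ to $\overline{z}=r\overline{x}$, so $\overline{x}$ generates $IC/C$.

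For the \emph{Furthermore} clause, applying the biconditional under the hypothesis $(x,C)=IC$ gives that $\overline{x}$ generates $IC/C$, and any $R$-module isomorphism carries cyclic generators to cyclic generators, so the image of $\overline{x}$ generates $R/I^{-1}$ as a module over itself. Such a generator is precisely a unit, since the condition that its multiples exhaust $R/I^{-1}$ forces $1$ to be one of those multiples.

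I do not anticipate a serious obstacle. The whole argument is the dictionary between cyclic module generators and ideal generators, together with the standard fact that isomorphisms preserve generators. The only care needed is to interpret the quotient $IC/C$ as a module over $R/I^{-1}$ rather than over $R/I$ (the latter being trivial, since $R\subset I$), which is the setting implicit in the paper's use of the class $E$ of fractional ideals containing $R$.
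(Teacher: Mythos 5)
Your proof is correct and follows essentially the same elementary unwinding-of-definitions route as the paper's own proof: both directions of the biconditional come from the identity $(x,C) = Rx + C$ and the inclusion $(x,C)\subseteq IC$, and the \emph{Furthermore} clause follows because an $R/I^{-1}$-module isomorphism carries a cyclic generator of $IC/C$ to a cyclic generator of $R/I^{-1}$, i.e.\ a unit. You also correctly identified that the statement's ``$R/I$ module'' and ``$(IC/C)^{-1}$'' are slips for ``$R/I^{-1}$ module'' and ``$IC/C$'' respectively (the paper's own proof uses $R/I^{-1}$ and $\phi:IC/C\to R/I^{-1}$), and your preliminary observation that $I^{-1}\cdot IC = C$ so that the module structure factors through $R/I^{-1}$ makes this explicit, which the paper leaves tacit.
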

\begin{proof}  It is clear that the ideal $C$ is contained in the ideal $(x,C)$, which is itself contained in $IC.$  Let $\overline{x}$  generate $IC/C$ as an $R/I^{-1}$ module.  Therefore, for any $z$ in $IC \setminus C$, there exists some $a \in R$ such that $\overline{z}= [a]\overline{x}$. This implies that there exists some $y$ in $C$ such that $z=ax+y$ and so $IC$ is contained in $(x,C).$  We conclude that $(x,C)$ is equal to $IC.$

Let the ideal $(x,C) = IC$, so that $x $ is not in$C,$ and let $\overline{z}$ be any non-zero element of $IC/C$.  There exists some $a$ in $R$ and some $y$ in $C$ such that $z = ax+y$, implying that $\overline{z} = [a] \overline{x}.$  We conclude that $\overline{x}$ generates $IC/C$ as an $R/I$ module. 

Let the function $\phi:IC / C \longrightarrow R/I^{-1}$ be an isomorphism of $R/I^{-1}$ modules.  If $\overline{x}$ generates $IC/C$ as an $R/I^{-1}$ module, then $\phi(\overline{x})$ generates $R/I^{-1}$ as an $R/I^{-1}$ module.  In other words, $[\phi(x)]$ is a unit in $R/I^{-1}.$

\end{proof}
\begin{lemm} \label{rel prime}  Suppose that $R$ is a Dedekind domain, that $I$ is an ideal in $E$, and that $x$ is an element of  $IC$ such that $(x,C)= IC.$  If $\p$ is a prime ideal such that $v_{\p}(I^{-1}) \neq 0,$ then $\p$ and $xI^{-1}C^{-1}$ are relatively prime.
\end{lemm}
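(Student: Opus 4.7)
The plan is to translate the coprimality claim into a single valuation computation at $\p$. Since $x \in IC$, the fractional ideal $xI^{-1}C^{-1}$ is contained in $R$, i.e.\ integral, so showing it is coprime to $\p$ amounts to showing $v_{\p}(xI^{-1}C^{-1}) = 0$, equivalently $v_{\p}(x) = v_{\p}(I) + v_{\p}(C)$.

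First I would unpack the hypotheses. Because $I \in E$ means $R \subseteq I$, the inverse $I^{-1}$ is an integral ideal of $R$; the assumption $v_{\p}(I^{-1}) \neq 0$ is therefore the strict inequality $v_{\p}(I^{-1}) > 0$, or equivalently $v_{\p}(I) < 0$. The containment $x \in IC$ already gives $v_{\p}(x) \geq v_{\p}(I) + v_{\p}(C)$, so only the reverse inequality needs to be proved.

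The key is then to apply $(x,C) = IC$. Taking $\p$-adic valuations and using that $v_{\p}(A+B) = \min(v_{\p}(A), v_{\p}(B))$ for fractional ideals $A, B$, this identity of ideals becomes
\[ \min\bigl(v_{\p}(x),\, v_{\p}(C)\bigr) \;=\; v_{\p}(I) + v_{\p}(C). \]
Since $v_{\p}(I) < 0$, the right-hand side is strictly smaller than $v_{\p}(C)$, so the minimum on the left must be achieved at $v_{\p}(x)$. This forces $v_{\p}(x) = v_{\p}(I) + v_{\p}(C)$, hence $v_{\p}(xI^{-1}C^{-1}) = 0$, which is exactly coprimality.

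There is no substantive obstacle here; the whole argument reduces to a one-line valuation check. The only thing to be careful about is correctly interpreting the nonvanishing hypothesis as the sign statement $v_{\p}(I) < 0$ via $I \in E$, and remembering that $(x,C)$ denotes the sum of the fractional ideals $(x)$ and $C$ so that its $\p$-adic valuation is the minimum of the two.
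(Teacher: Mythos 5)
Your proof is correct and is essentially the paper's argument translated into valuation language. Where you compute $\min(v_{\p}(x), v_{\p}(C)) = v_{\p}(I) + v_{\p}(C)$ and conclude $v_{\p}(xI^{-1}C^{-1}) = 0$, the paper equivalently supposes $xI^{-1}C^{-1} \subseteq \p$ and derives the contradiction $IC = (x,C) \subseteq \p IC \subsetneq IC$; both hinge on the same one-line arithmetic at $\p$.
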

\begin{proof}If $\p$ is a prime ideal and  $I^{-1}$ is contained in $\p,$ then the ring $R$ is contained in $\p I$ and $C$ is contained in $\p I C.$  Suppose that the integral ideal $xI^{-1}C^{-1}$ is also contained in $\p,$ implying that  the element $x$ would be contained in $\p IC$, so the ideal $(x,C)$ would be contained in $\p IC.$  This is a contradiction, so the ideals $xI^{-1}C^{-1}$ and $\p$ must be relatively prime.
\end{proof}

\begin{lemm}\label{Dirichlet ideal class}Suppose that $K$ is a number field and that $I$ be an ideal in $E$, $I \neq R.$ If $x$ is an element of $IC$ and $(x,C)=IC$, then the set of  prime ideals $\p$ such that $\p^{-1} \sim x \pmod{IC,C}$ is of positive density in the set of all prime ideals.  In other words, there is a positive density of prime ideals $\p$ such that $\p^{-1} =(x+y)^{-1}IC$ for some $y$ in $C$.
\end{lemm}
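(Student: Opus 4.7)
The plan is to reformulate the stated similarity condition as a condition on the ray class of $\p$ modulo $I^{-1}$, and then invoke the generalized Dirichlet theorem for number fields (equivalently, Chebotarev's theorem applied to the ray class field of $K$ with conductor $I^{-1}$) to deduce positive density. As a preliminary, I would set $\mathfrak{a} := xI^{-1}C^{-1}$; since $x \in IC$, the fractional ideal $\mathfrak{a}$ is integral, and by Lemma \ref{rel prime} it is coprime to $I^{-1}$.

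Next, I would show that for any prime $\p$ of $\O_K$ coprime to $I^{-1}$, the similarity condition $\p^{-1} \sim x \pmod{IC, C}$ is equivalent to $\p$ and $\mathfrak{a}$ lying in the same ray class modulo $I^{-1}$, i.e.\ to $\p = (\beta)\mathfrak{a}$ for some $\beta \in K^{\times}$ with $\beta \equiv 1 \pmod{I^{-1}}$. For the forward direction, if $\p IC = (x+y)$ for some $y \in C$, then $\beta := (x+y)/x$ satisfies $\p = (\beta)\mathfrak{a}$ together with $\beta - 1 = y/x \in C \cdot (IC)^{-1} = I^{-1}$. Conversely, given such a $\beta$, I would set $y := x(\beta - 1)$; then $y \in (x)I^{-1} = \mathfrak{a} C \subset C$ (the last inclusion because $\mathfrak{a}$ is integral), and $(x+y) = (x\beta)$ generates $\p IC$, as required.

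With this reformulation in hand, the set of primes described by the lemma consists (up to the finitely many primes dividing $I^{-1}$, which cannot affect density) of precisely the prime ideals lying in one specific class of the finite ray class group of $K$ modulo $I^{-1}$. The generalized Dirichlet theorem for number fields then gives this set a natural density equal to the reciprocal of the order of that ray class group, which is positive. The main obstacle is the bidirectional translation above — in particular, checking that $y = x(\beta - 1)$ really lies in $C$, which is where the hypotheses $x \in IC$ and $(x,C)=IC$ (via Lemma \ref{rel prime}) are used. Once that identification is in place, the density conclusion is entirely classical.
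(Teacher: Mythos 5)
Your overall strategy matches the paper's: translate the similarity condition into membership in a ray class modulo $I^{-1}$ by passing to the integral ideal $\mathfrak{a}=xI^{-1}C^{-1}$ (coprime to $I^{-1}$ by Lemma~\ref{rel prime}), and then invoke Chebotarev for the ray class field. The paper phrases this via the Artin map $\psi_{K^{I^{-1}}}$ rather than an explicit $\beta$, but the translation is the same.

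There is, however, a genuine gap in your converse direction. You set $y:=x(\beta-1)$ and assert that $y\in (x)I^{-1}=\mathfrak{a}C\subset C$. This would require $\beta-1\in I^{-1}$ as a global containment of fractional ideals, i.e.\ $v_{\mathfrak{q}}(\beta-1)\ge v_{\mathfrak{q}}(I^{-1})$ at \emph{every} prime $\mathfrak{q}$. But the ray class condition only controls $v_{\mathfrak{q}}(\beta-1)$ at $\mathfrak{q}\mid I^{-1}$. At a prime $\mathfrak{q}$ dividing $\mathfrak{a}$ (and not dividing $I^{-1}$ or $\p$) one has $v_{\mathfrak{q}}(\beta)=v_{\mathfrak{q}}(\p\mathfrak{a}^{-1})=-v_{\mathfrak{q}}(\mathfrak{a})<0$, hence $v_{\mathfrak{q}}(\beta-1)<0$, so $\beta-1\notin I^{-1}$ whenever $\mathfrak{a}\ne\O_K$. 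The analogous over-claim appears in your forward direction (``$y/x\in I^{-1}$''), though there it is harmless because you only need the local condition at $\p\mid I^{-1}$.

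The correct route, which is exactly the one the paper uses, is a two-step argument: first observe that $x\beta$ generates $\p IC\subseteq IC$ and $x\in IC$, so $y=x\beta-x\in IC$; in particular $v_{\mathfrak{q}}(y)\ge v_{\mathfrak{q}}(C)$ at every $\mathfrak{q}\nmid I^{-1}$. Then, at $\p\mid I^{-1}$, combine $v_{\p}(x)=v_{\p}(I)+v_{\p}(C)$ (using $v_{\p}(\mathfrak{a})=0$) with the ray condition $v_{\p}(\beta-1)\ge v_{\p}(I^{-1})=-v_{\p}(I)$ to get $v_{\p}(y)\ge v_{\p}(C)$. Together these give $y\in C$. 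So your proposal is salvageable, but the inclusion $y\in (x)I^{-1}$ as stated is false and needs to be replaced by this prime-by-prime estimate.
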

\begin{proof}({Proof of Lemma \ref{Dirichlet ideal class}})  Suppose that $K^{I^{-1}}$ is the ray class field of $K$ of modulus $I^{-1}$ and that $x$ is an element of $IC$ such that $(x,C) = IC.$  We know by Lemma \ref{rel prime} that $(x,C)$ is relatively prime to any prime ideal $\p$ such that $\p$ divides $I^{-1},$  so the integral ideal $xI^{-1}C^{-1}$ is in the domain of the Artin map( i.e. $\I^{S(I^{-1})}$) for $K^{I^{-1}}$.  We shall call the support of the modulus $S(I^{-1})$ and we shall call this Artin map $\psi_{K^{I^{-1}}}.$

The Chebotarev density theorem implies that there is a positive density  of  prime ideals $\q$ in the preimage of $\psi_{K^{I^{-1}}}(xI^{-1}C^{-1})$  in the set of all prime ideals.  Each of these ideals $\q$ can be written as $(1+q)xI^{-1}C^{-1}$ for some $q$, where $v_{\p}(q) \geq v_{\p}(I^{-1})$ for all primes $\p$ in $S(I^{-1})$.  Note that $(x + xq) = (1+q)x = \q IC$, so that $x+xq$ is an element of $IC.$  We chose $x$ to be an element of $IC$, so $xq$ is also an element of $IC.$  As such, we know that $v_{\p}(xq) \geq v_{\p}(IC)$ for all primes $\p.$  Note this implies that $v_{\p}(xq) \geq v_{\p}(C)$ for $\p \notin S(I^{-1})$  because $v_{\p}(I)=0.$  Since $x$ is an element of $IC$, we know that $v_{\p}(x) \geq v_{\p}(IC)$ for all primes $\p.$  We know that $v_{\p}(q) \geq v_{\p}(I^{-1})$ for all primes $\p$ in $S(I^{-1})$, so that  $v_{\p}(xq) = v_{\p}(q)+v_{\p}(x) \geq v_{\p}(I^{-1}) + v_{\p}(IC) = v_{\p}(C)$ for primes $\p$ in $S(I^{-1}).$  We conclude that, because $v_{\p}(xq) \geq v_{\p}(C)$ for all primes $\p$, $xq$ is an element of $C$, so that $\q = (x +qx)I^{-1}C^{-1}$, where $qx$ is an element of $C$.  We conclude that   $\q^{-1} \sim x \pmod{IC,C}.$
\end{proof}
\begin{defi} For any $I \in E$  that is equivalent to $C^n$, $h_K \geq n >0$, fix some $x_I \in K^{\times}$ such that $I = x_{I^{-1}}^{-1} C^n.$
\end{defi} \index{$x_I$}
 Henceforth, assume that $C$ is an integral ideal.  This means that 
 \newline $(x_{I^{-1}}) = I^{-1}C^n$, so that $x_{I^{-1}}$ is an element of $C^n$ and is therefore an integer. 

\begin{lemm}\label{equiv}  Given two elements $x$ and $y $ in $C^n$, $n \geq 0$,  
\newline $x\equiv y (\textrm{mod} ~ \p C^n)$ if and only if $x \equiv y ~(\textrm{mod} ~ \p)$.  
\end{lemm}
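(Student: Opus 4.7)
The plan is to translate the congruence into ideal containment and verify it one prime at a time via $\mathfrak{q}$-adic valuations. Setting $z = x-y$, the hypothesis that $x, y \in C^n$ makes $z \in C^n$ since $C^n$ is an ideal, so the statement to prove reduces to: for $z \in C^n$, one has $z \in \mathfrak{p} C^n$ if and only if $z \in \mathfrak{p}$.

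The forward implication is immediate from the containment $\mathfrak{p} C^n \subseteq \mathfrak{p}$ and needs no further work. For the reverse, I would check the condition $z \in \mathfrak{p} C^n$ at each prime: it amounts to $v_{\mathfrak{q}}(z) \geq v_{\mathfrak{q}}(\mathfrak{p}) + n\, v_{\mathfrak{q}}(C)$ for every prime $\mathfrak{q}$ of $\O_K$. For $\mathfrak{q} \neq \mathfrak{p}$ we have $v_{\mathfrak{q}}(\mathfrak{p}) = 0$, so the desired bound collapses to $v_{\mathfrak{q}}(z) \geq n\, v_{\mathfrak{q}}(C)$, which is exactly the content of $z \in C^n$. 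For $\mathfrak{q} = \mathfrak{p}$, the required bound is $v_{\mathfrak{p}}(z) \geq 1 + n\, v_{\mathfrak{p}}(C)$, and I would combine the two available inequalities $v_{\mathfrak{p}}(z) \geq 1$ (from $z \in \mathfrak{p}$) and $v_{\mathfrak{p}}(z) \geq n\, v_{\mathfrak{p}}(C)$ (from $z \in C^n$) to conclude.

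The step that needs care is precisely this last one: adding the two bounds only yields $v_{\mathfrak{p}}(z) \geq 1 + n\, v_{\mathfrak{p}}(C)$ automatically when $v_{\mathfrak{p}}(C) = 0$, i.e.\ when $\mathfrak{p}$ is coprime to $C$. Without this coprimality the statement genuinely fails, as one sees in $R = \Z[i]$ with $C = \mathfrak{p} = (1+i)$, $n = 1$, $x = 1+i$, $y = 0$. Thus the main obstacle is really bookkeeping: the lemma is being applied (as in Lemma \ref{Dirichlet ideal class} and the primes $\mathfrak{p}$ with $[\mathfrak{p}] = [C]$ appearing in Theorem \ref{biquadratic}) to primes not dividing $C$, and under that tacit hypothesis the valuation computation above delivers both directions of the equivalence.
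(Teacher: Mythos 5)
Your proof is correct and takes essentially the same route as the paper: both reduce the reverse implication to the coprimality of $\p$ and $C$, with the paper invoking the standard Dedekind-domain fact that coprime ideals have intersection equal to product, while your prime-by-prime valuation check is just a direct verification of that same fact. Your observation that coprimality is a tacit but essential hypothesis is accurate — the paper asserts it mid-proof (``We know that $\p$ and $C$ are relatively prime'') rather than stating it in the lemma, and your $\Z[i]$ counterexample correctly shows the statement fails without it.
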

\begin{proof} Since $x \equiv y ~(\textrm{mod} ~ \p C^n)$, then $x-y$ is in $\p C^n$ and is therefore in $\p$, so $x \equiv y ~(\textrm{mod} ~ \p)$.  

Conversely, suppose $x \equiv y ~(\textrm{mod} ~ \p)$.  Then $x -y \in \p$, but we also know that $x-y \in C^n$ because both $x, y \in C^n$.  As $x-y$ is in both $\p$ and $C^n$, it is in the intersection of $\p$ and $C^n$.  We know that $\p$ and $C$ are relatively prime, so $\p$ and $C^n$ are relatively prime, which means that their intersection is in fact their product.  We conclude that $x \equiv y ~(\textrm{mod} ~ \p C^m)$.  
\end{proof}

\begin{lemm} \label{in C}  Suppose that $\p$ is a prime ideal such that $[\p^{-1}] = [C^{n-1}].$ Given an element $y$ in $K^{\times}$, the product $yx_{\p}$ is in $\p C^n$ if and only if $y$ is an element of $C$.
\end{lemm}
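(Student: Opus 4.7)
The plan is to reduce the statement to a one-line ideal computation by unpacking the definition of $x_\p$. Since $[\p^{-1}] = [C^{n-1}]$, the fractional ideal $\p C^{n-1}$ is principal, and the convention established immediately before the lemma (applied with the role of $I$ played by $\p^{-1}$) supplies an $x_\p \in K^{\times}$ satisfying $(x_\p) = \p C^{n-1}$.

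With this identity in hand, I would argue as follows. For any $y \in K^{\times}$ and any fractional ideal $\mathfrak{a}$, the elementwise condition $y \in \mathfrak{a}$ is equivalent to the ideal inclusion $(y) \subseteq \mathfrak{a}$. Applying this with $\mathfrak{a} = \p C^n$, the condition $yx_\p \in \p C^n$ becomes $(y)(x_\p) \subseteq \p C^n$, and substituting $(x_\p) = \p C^{n-1}$ rewrites it as $(y)\,\p C^{n-1} \subseteq \p C^n$. Multiplying both sides by the invertible fractional ideal $(\p C^{n-1})^{-1}$ yields the equivalent condition $(y) \subseteq C$, i.e.\ $y \in C$. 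The degenerate case $y = 0$ is immediate, as $0$ lies in every ideal.

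The main obstacle is essentially nil: the entire content of the lemma is the defining property of $x_\p$, so the proof is really just the translation of an element-level divisibility statement into an ideal-level inclusion via the generator $x_\p$ of the principal fractional ideal $\p C^{n-1}$, followed by cancellation of that invertible factor. The only point worth flagging is the book-keeping around the subscript in the preceding definition (the author writes $x_{I^{-1}}$ where one might expect $x_I$); my proof plan fixes the notation by working with the equality $(x_\p) = \p C^{n-1}$ directly, which is the substantive consequence and all that is needed here.
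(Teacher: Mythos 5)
Your proposal is correct and follows essentially the same route as the paper: both proofs rest on the single identity $(x_\p) = \p C^{n-1}$ and reduce the element-level statement to an ideal computation, with the paper merely writing out the two directions separately rather than as one chain of equivalences cancelled by the invertible ideal $(\p C^{n-1})^{-1}$.
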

\begin{proof}    If $y$ is in $C$, then $yx_{\p}$ is an element of $(\p C) (C^{n-1}) = \p C^n.$

  If $y x_{\p}$ is an element of $\p C^n$, then $(y x_{\p}) = I \p C^n$, for some non-zero integral ideal $I$.  Therefore $(y) = I x_{\p}^{-1}C^{n-1} \p C = I \p^{-1} \p C$ so that $(y) = I C$.  We conclude $y \in IC$, which implies that $y \in C.$
\end{proof} 

\begin{lemm}  \label{xpiso} Multiplication by $x_{\p}$ is an isomorphism from $\p^{-1} C / C $ to $ C^n / \p C^n.$
\end{lemm}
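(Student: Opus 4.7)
The plan is to exhibit the desired isomorphism concretely as multiplication by $x_\p$: I would define $\mu \colon \p^{-1}C \to C^n$ by $\mu(z) = x_\p z$, verify that $\mu$ is an $R$-module bijection, and then check that $\mu$ carries the submodule $C$ onto $\p C^n$, so that passage to quotients yields the claim.

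The arithmetic rests on the observation that, because $[\p^{-1}] = [C^{n-1}]$, applying the definition of $x_I$ with $I = \p^{-1}$ (which is equivalent to $C^{n-1}$) gives $(x_\p) = \p C^{n-1}$. From this, the containment $x_\p \cdot \p^{-1}C \subseteq C^n$ is just the ideal identity $\p C^{n-1} \cdot \p^{-1}C = C^n$, and running the same computation with $x_\p^{-1}$ shows that multiplication by $x_\p^{-1}$ sends $C^n$ back into $\p^{-1}C$. Hence $\mu$ is bijective, and it is $R$-linear for free.

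It remains to check that $\mu(C) = \p C^n$. The inclusion $x_\p C \subseteq \p C^n$ is immediate from $(x_\p) \cdot C = \p C^n$. For the reverse containment, given $w \in \p C^n$ I would use the bijectivity just established to write $w = x_\p z$ with $z \in \p^{-1}C$, and then invoke Lemma \ref{in C} to force $z \in C$. Passing to quotients yields the claimed isomorphism $\p^{-1}C / C \xrightarrow{\sim} C^n / \p C^n$. There is no substantive obstacle here: the whole argument is formal once one unravels the indexing to confirm $(x_\p) = \p C^{n-1}$, and Lemma \ref{in C} then handles the only non-formal step, namely the kernel computation.
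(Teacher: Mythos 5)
Your proposal is correct and follows essentially the same path as the paper's proof: both rest on the ideal identity $(x_\p) = \p C^{n-1}$ to see that multiplication by $x_\p$ carries $\p^{-1}C$ bijectively onto $C^n$, and both invoke Lemma \ref{in C} to pin down $C$ as the preimage of $\p C^n$, so that passing to quotients gives the isomorphism. The only cosmetic difference is that you phrase the last step as verifying $\mu(C) = \p C^n$ before quotienting, while the paper phrases it as a kernel computation for the composite map; these are the same argument.
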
  
\begin{proof}Given some $\beta$ in $\p^{-1} C$, we know that $x_{\p} \beta $ is in $\p C^{n-1} \p^{-1} C $, which is equal to $C^n$, so that multiplication by $x_{\p}$ is a map from $\p^{-1}C$ to $C^n$.  Let $b $ be an element of $C^n.$  Then $x_{\p}^{-1} b$ is an element of $\p^{-1} C^{1-n} C^n = \p^{-1}C$, so that multiplication by $x_{\p}$ is surjective.  

By Lemma \ref{in C},  the product $yx_{\p}$ is in $\p C^n$ if and only if $y $ is in $C$, so that $x_{\p}^{-1} \p C^n$ is $C$, and the kernel of the composition of multiplication by $x_{\p}$ and taking the quotient of $\p^{-1}C$ by $C$ is $\p C^n.$.  The isomorphism follows.  
\end{proof}

\section{Growth Results}
\begin{defi} Given a Dedekind domain $R$ and a non-zero ideal $C$, we define 

 $B_{0,C}:=\{R \},$

$B_{i,C} = B_{i-1,C}\cup\left\{ \p^{-1} \left|\
  \begin{array}{c} 
  \p \subseteq R\ \text{is prime, and} \\ 
    \forall x\in \p^{-1}C\setminus C,\ \exists y\in C \text{ such}\\
    \text{that}\ (x-y)^{-1}\p^{-1}C\in B_{i-1,C}
  \end{array}
  \right.\right\}$ for $i>0,$
  
and  $ B_C:= \bigcup_{i =0}^{\infty} B_{i,C}.$

\end{defi}

\begin{theorem}\label{joint functions}Suppose that $K$ is a number field and that $C$ is a non-zero ideal of $\O_K$.  If $B_C$ contains all ideals $\p^{-1}$ such that $\p$ is prime, then $C$ is a Euclidean ideal.
\end{theorem}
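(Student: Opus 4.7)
The plan is to apply Lemma \ref{modified motzkin's lemma I}, which reduces the desired conclusion ``$C$ is a Euclidean ideal'' to the set equality $A_C = E$. The hypothesis concerns $B_C$, so the first task is to link the two constructions.

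The first stage is to show by induction on $i$ that $B_{i,C} \subseteq A_{i,C}$, and hence $B_C \subseteq A_C$. The base case $B_{0,C} = \{R\} = A_{0,C}$ is immediate. For the inductive step, the defining condition for $\p^{-1} \in B_{i,C}$ (for a prime $\p$) is exactly the condition for $\p^{-1} \in A_{i,C}$ specialized to $I = \p^{-1}$, so the inductive hypothesis $B_{i-1,C} \subseteq A_{i-1,C}$ carries membership over. Consequently, the hypothesis gives $\p^{-1} \in A_C$ for every prime $\p$ of $\O_K$.

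The second stage is to extend this from inverse primes to the full set $E$. For arbitrary $I \in E$ with $I \neq R$ and any $x \in IC \setminus C$, the goal is to produce $y \in C$ with $(x+y)^{-1}IC \in A_C$. When $(x,C) = IC$, Lemma \ref{Dirichlet ideal class} directly supplies $y \in C$ and a prime $\p$ such that $(x+y)^{-1}IC = \p^{-1}$, which lies in $A_C$ by stage one. In the remaining case $(x,C) = JC$ with $J \subsetneq I$ in $E$, Lemma \ref{Dirichlet ideal class} applied to $J$ (whose hypothesis is then satisfied by $x$) yields $y \in C$ and a prime $\q$ with $(x+y) = \q JC$, so that $(x+y)^{-1}IC = \q^{-1}J^{-1}I$, an element of $E$ that is structurally simpler than $I$; I would then close the induction on the complexity measure $d(I) := \sum_\p v_\p(I^{-1})$.

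The main obstacle is twofold: first, the complexity measure $d$ may fail to strictly decrease in the boundary case $J = \mathfrak{l}^{-1}$ for a prime $\mathfrak{l}$, forcing a more refined induction (for instance lexicographic on prime valuations, or one that tracks the shape of $I^{-1}$ in a finer way); second, the application of Lemma \ref{modified motzkin's lemma I} requires a uniform Motzkin depth, i.e., a single $i$ with $I \in A_{i,C}$, whereas the $x$-by-$x$ construction above only provides $(x+y)^{-1}IC$ in $A_C$ with no a priori uniform bound on depth. I expect the resolution to hinge on exploiting the positive-density freedom in Lemma \ref{Dirichlet ideal class} to choose primes $\p$ (resp.\ $\q$) of controllable $B_C$-depth, converting density into the uniform bound needed.
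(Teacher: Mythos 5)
The paper's proof does \emph{not} go through Lemma \ref{modified motzkin's lemma I}, i.e.\ it does not try to show $A_C = E$. Instead it directly constructs a Euclidean algorithm $\phi = (\omega,\lambda): E \to \N\times\N$, ordered lexicographically, where $\omega(I)$ counts prime divisors of $I^{-1}$ with multiplicity and $\lambda$ is the additive extension of the depth in the filtration $B_{i,C}$; it then shows this $\phi$ satisfies the Euclidean condition by strong induction on $\phi(I)$. This is a genuinely different route from yours, and the difference matters.

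Your second worry --- the need for a single uniform $i$ with $I\in A_{i,C}$ --- is in fact not a problem: the defining condition depends only on the coset $x+C$, and $IC/C \cong \O_K/I^{-1}$ is finite, so a maximum over the finitely many cosets always exists. Your first worry, however, is the real gap: when $(x,C) = JC$ with $J = \mathfrak{l}^{-1}$ for a prime $\mathfrak{l}$, applying Lemma \ref{Dirichlet ideal class} to $J$ lands you at $(x+y)^{-1}IC = \q^{-1}J^{-1}I$ whose inverse has exactly as many prime factors (with multiplicity) as $I^{-1}$, so $d$ does not decrease and the induction stalls. You flag this, but the fix you gesture at (``exploiting the positive-density freedom to choose $\q$ of controllable $B_C$-depth'') is not available from the hypothesis: nothing guarantees that primes of small $\lambda$-depth have positive density among those in the relevant ray class.

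The paper's resolution to this stalling is precisely the lexicographic device. Setting $L = (I^{-1},xI^{-1}C^{-1})$, one has $(IL)C = (x,C)$, so $x\in (IL)C\setminus C$ with $IL\in E$; since $L\subsetneq\O_K$, $\omega(IL)<\omega(I)$, hence $\phi(IL)<\phi(I)$ and the inductive hypothesis for $\phi$ applies to $IL$. Additivity of $\phi$ then gives
\[
\phi\bigl((x+y)^{-1}IC\bigr) = \phi\bigl((x+y)^{-1}(IL)C\bigr) + \phi(L^{-1}) < \phi(IL) + \phi(L^{-1}) = \phi(I),
\]
even though $\omega$ alone may not drop. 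Note also that your target $A_C = E$ corresponds to the existence of an $\N$-valued Euclidean algorithm, whereas the paper only produces an $\N\times\N$-valued one; so you may be trying to prove a statement strictly stronger than what the hypothesis yields, which is an additional reason the induction on $d$ resists closing.
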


\begin{proof} The techniques of this proof follow those in \cite{Harper: 1}, however we use Lemma \ref{Dirichlet ideal class} instead of Dirichlet's theorem on primes in arithmetic progressions. 

Define $\omega$ to be the function mapping from $E$ to $\N$ such that $\omega(I)$ is the number of prime divisors of $I^{-1}$, with multiplicity.    If $\p$ is a prime ideal, define $\lambda(\p^{-1}):=i$ if $\p^{-1}$ is an element of $B_{i,C} \setminus B_{i-1,C}.$  Extend $\lambda$ to all of $E$ by additivity.   We then combine the two functions together as  $\phi: E \longrightarrow \N \times \N$, with $\phi(I) = ( \omega(I), \lambda(I))$, and order the image $\phi(E)$ lexicographically.  Because both $\omega$ and $\lambda$ are additive functions, the function $\phi$ is additive as well.   Note  that  $\phi(I) = (0,0)$ if and only if $I = \O_K$.  

In order to prove Theorem \ref{joint functions}, it suffices to show that $\phi$ is a Euclidean algorithm with respect to $C$, which we shall prove by induction.  Suppose that $\p$ is a prime ideal and that $\phi(\p^{-1}) =(1, i)$.  For any $x$ in $\p^{-1}C \setminus C$, there exists some $y$ such that  $(x+y)^{-1}\p^{-1}C$ is an element of $B_{i-1,C}$, so that $\phi((x+y)^{-1}\p^{-1}C)=(0,0)$ or $\phi((x+y)^{-1}\p^{-1}C) =(1, j)$, where $j <i.$

Let $I$ be an element of $E$ with $I^{-1}$ not a prime ideal.  Suppose that for all $J$ such that $\phi(J) < \phi(I)$ and for all $a$ in $JC \setminus C$, there exists some $y$ in $C$ such that $\phi((x+y)^{-1}JC) < \phi(J).$   If $x \in IC \setminus C$ and $(x,C) = IC$, we can apply Theorem \ref{Dirichlet ideal class}.  There exists some $y \in C$ such that $(x+y)^{-1}IC = \p^{-1}$, where $\p$ is a prime ideal.  Since $\phi(\p) $ is less than $\phi(I)$, the condition is satisfied.

If $x \in IC \setminus C$ and $(x,C)$ is not equal to $IC$, then we define $L$ to be $(I^{-1}, xI^{-1}C).$  The integral ideal $L$ contains $I^{-1}$, so  $I^{-1}L^{-1}$ has fewer prime divisors (counting multiplicities) than $I^{-1}$ and $\phi(IL)$ is less than $\phi(I)$.  There exists some $y$ in $C$ such that $\phi((x+y)^{-1}ILC) < \phi(IL).$  The additivity of $\phi$ implies that given two ideals $M$ and $N$, $\phi(MN) = \phi(M) + \phi(N)$, so  $\phi((x+y)IC) < \phi(I)$ and the condition holds.  We conclude that $\phi$ is a Euclidean algorithm for $[C]$ and that $[C]$ is a Euclidean ideal class.
\end{proof}

\begin{defi} Given the set $B_{i,C}$, we define 

$B_{i,C}(x):= \{ I \in B_{i,C}| \Nm(I^{-1}) \leq x \},$

$B_{i,C}^c:=\{ I \in E \setminus B_{i,C} |  [I] = [J] \text{ for any } J \in B_{i,C}\}, \text{ and}$

$B_{i,C}^c(x):=\{ I \in B_{i,C}^c| \Nm(I^{-1}) \leq x\}.$
\end{defi}

\begin{theorem}\label{main result}If $K$ is a number field such that $|\O_K^{\times}| = \infty$, $[C]$ generates $\Cl_K$, and   if 
$$| B_{1,C}(x) | \gg \frac{x}{\log^2 x},$$  then $E = A_C$ and $C$ is a Euclidean ideal.
\end{theorem}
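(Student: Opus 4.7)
The plan is to combine Theorem \ref{joint functions} with a large-sieve bootstrap of the sets $B_{i,C}$. The end goal is to verify that $B_C$ contains $\p^{-1}$ for every prime ideal $\p$; Theorem \ref{joint functions} will then imply that $C$ is a Euclidean ideal, and the corresponding Motzkin-type construction gives $E = A_C$.

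The first main step will be a large-sieve induction showing that the density of primes $\q$ (ordered by norm) with $\q^{-1} \in B_{i,C}$ tends to $1$ as $i \to \infty$. The base case $i = 1$ is precisely the hypothesis $|B_{1,C}(x)| \gg x / \log^2 x$. For the inductive step, I would translate the failure $\q^{-1} \notin B_{i+1,C}$ into a system of forbidden residue classes for $\q$. Concretely, if $\q^{-1} \notin B_{i+1,C}$, then some $x \in \q^{-1}C \setminus C$ admits no $y \in C$ with $(x + y)^{-1}\q^{-1}C \in B_{i,C}$. By Lemma \ref{xpiso}, multiplication by $x_\q$ identifies $\q^{-1}C / C$ with $C^n/\q C^n \cong \O_K/\q$, and the failure condition translates to $\q$ lying outside a certain residue subset modulo each prime $\p$ with $\p^{-1} \in B_{i,C}$. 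The large-sieve inequality then bounds the number of such bad $\q \leq x$ in terms of the cardinality of $B_{i,C}$, yielding $|B_{i+1,C}^c(x)| = o(|B_{i,C}^c(x)|)$. Iterating pushes the density of primes in $B_C$ to $1$.

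The second step will close the gap between density-$1$ capture and capture of every prime. Given any remaining prime $\q$, I would apply Lemma \ref{Dirichlet ideal class} to each $x \in \q^{-1}C \setminus C$ with $(x, C) = \q^{-1}C$: it produces a positive-density set of primes $\p$ with $\p^{-1} \sim x \pmod{\q^{-1}C, C}$. Because $B_C$ has density $1$ among primes while the similarity set has positive density, some such $\p$ lies in $B_C$, yielding a $y \in C$ with $(x + y)^{-1}\q^{-1}C = \p^{-1} \in B_C$. Since $\q^{-1}C / C$ is finite, only finitely many residues of $x$ need to be handled, so $\q^{-1}$ eventually enters some $B_{i,C}$. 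The residual case $(x, C) \neq \q^{-1}C$ reduces as in the proof of Theorem \ref{joint functions} via induction on the number of prime divisors.

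The main obstacle will be calibrating the large-sieve step so that the inductive gain is genuinely quantitative. To extract enough independent residue conditions from the failure $\q^{-1} \notin B_{i+1,C}$, the image of $\O_K^\times$ in $(\O_K/\p)^\times$ must be sizable for most primes $\p$ appearing in $B_{i,C}$; this is where $|\O_K^\times| = \infty$ enters, via Dirichlet's unit theorem controlling how many cosets the units cover. A secondary difficulty is ensuring that the residue sets encoded by $B_{i,C}$ are not too sparse, so that the sieve bound is non-trivial; this is precisely where the quantitative rate $x/\log^2 x$ does essential work, mirroring the delicate sieve estimate in Harper's dissertation.
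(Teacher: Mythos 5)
Your overall architecture is right---reduce via Theorem~\ref{joint functions} to showing every $\p^{-1}$ lies in some $B_{i,C}$, use the large sieve to capture almost all primes, then use Lemma~\ref{Dirichlet ideal class} (Chebotarev) to absorb the rest---and the final Chebotarev step is handled correctly, including the observation that finiteness of $\q^{-1}C/C$ bounds the level $i$ needed. But there is a real gap in how the sieve step is set up, and it is precisely the point where the hypothesis $|\O_K^{\times}| = \infty$ does its work.

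You describe the failure $\q^{-1} \notin B_{i+1,C}$ as ``$\q$ lying outside a certain residue subset modulo each prime $\p$ with $\p^{-1} \in B_{i,C}$,'' and you invoke Gupta--Murty to control $f(\p)$ ``for most primes $\p$ appearing in $B_{i,C}$.'' This has the sieve geometry reversed. In the paper's argument the moduli are the \emph{bad} primes $\q \in B_{2,C}^c(x)$, and what is being sifted is the set $\A = \{x_I : I \in B_{1,C}(x^2)\}$: Corollary~\ref{newsievecoro} with $A = B_{1,C}(x^2)$, $P = B_{2,C}^c(x)$ gives $\sum_{\q \in P} \omega(\q)/\Nm(\q) \ll \log^2 x$. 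The crucial observation, which your plan does not state, is that the forbidden set modulo a bad $\q$ is a union of $\O_K^{\times}$-orbits (if $Z(\beta,\q,C)=0$ for one $\beta \notin C$, then $Z(u\beta,\q,C)=0$ for all units $u$), so $f(\q) \mid \omega(\q)$. Gupta--Murty is then applied to the \emph{bad} primes $\q$, not to the primes in $B_{i,C}$: all but $o(x/\log x)$ primes satisfy $f(\q) > \Nm(\q)^{1/2-\epsilon}$, so for those bad $\q$ one gets $\omega(\q)/\Nm(\q) > \Nm(\q)^{-1/2-\epsilon}$, and the $\log^2 x$ bound forces $|B_{2,C}^c(x)| = o(x/\log x)$ in a single sieve application. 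Without the divisibility $f(\q)\mid\omega(\q)$ the sieve bound is too weak to give this, and without applying Gupta--Murty to the sifting moduli the argument does not close. Relatedly, your proposed iteration ($|B_{i+1,C}^c(x)| = o(|B_{i,C}^c(x)|)$ as $i \to \infty$) is unnecessary once the step above is done correctly: $B_{2,C}$ already has full density, and one further Chebotarev step puts every prime into $B_{3,C}$.
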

\begin{coro}\label{main coro} If $K$ is an number field such that $|\O_K^{\times}| = \infty$,  if $[C]$ generates $\Cl_K$, and if
$$\left | \left \{
\begin{array}{c} \text{prime ideals } \\
\p \in \O_K 
\end{array}\left| \Nm(\p) \leq x,[\p] = [C], \O_K^{\times} \twoheadrightarrow (\O_K / \p)^{\times} \right. \right \} \right | \gg \frac{x}{\log^2 x},$$ then $C$ is a Euclidean ideal.
\end{coro}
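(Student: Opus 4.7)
The plan is to invoke Theorem \ref{main result}, which reduces the corollary to establishing the lower bound $|B_{1,C}(x)| \gg x/\log^2 x$. I will achieve this by showing that every prime ideal $\p$ satisfying both $[\p] = [C]$ and $\O_K^{\times} \twoheadrightarrow (\O_K/\p)^{\times}$ contributes an element $\p^{-1} \in B_{1,C}$. Distinct primes yield distinct fractional ideals, and $\Nm((\p^{-1})^{-1}) = \Nm(\p)$, so the hypothesized count of primes transfers directly into the required lower bound on $|B_{1,C}(x)|$, at which point Theorem \ref{main result} delivers the conclusion that $C$ is a Euclidean ideal.

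To verify the claim, I would unwind the definition: $\p^{-1} \in B_{1,C}$ amounts to showing that for every $x \in \p^{-1}C \setminus C$ some $y \in C$ makes $(x-y)^{-1}\p^{-1}C = R$, equivalently $x - y$ generates the fractional ideal $\p^{-1}C$. Since $[\p] = [C]$, the fractional ideal $\p C^{-1}$ is principal; fixing a generator $\pi$ gives $\p = (\pi)C$ and $\p^{-1}C = (\pi^{-1})$, so the generators of $\p^{-1}C$ are exactly the elements $u\pi^{-1}$ with $u \in \O_K^{\times}$. Setting $x - y = u\pi^{-1}$ and using $C\pi = \p$, the requirement reduces to finding a unit $u \in \O_K^{\times}$ with $u \equiv x\pi \pmod{\p}$.

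The decisive observation is that $x\pi$ represents an element of $(\O_K/\p)^{\times}$: on the one hand $x\pi \in \p^{-1}C \cdot \p C^{-1} = \O_K$, and on the other hand $x\pi \in \p$ would force $x \in \p \pi^{-1} = C$, contradicting $x \notin C$. The surjectivity hypothesis $\O_K^{\times} \twoheadrightarrow (\O_K/\p)^{\times}$ then immediately supplies the desired unit $u$. The main obstacle is really just this bookkeeping step—choosing the correct principal generator $\pi$ and checking that the relevant residue class lands in the unit group, which is precisely where both hypotheses on $\p$ enter simultaneously; no further analytic input is needed beyond the growth assumption already built into the hypothesis.
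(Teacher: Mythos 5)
Your proposal is correct and follows essentially the same route as the paper: you reduce to showing $\p^{-1} \in B_{1,C}$ whenever $[\p]=[C]$ and $\O_K^{\times} \twoheadrightarrow (\O_K/\p)^{\times}$, by identifying $\p^{-1}C/C$ with $\O_K/\p$ via multiplication by a generator of $\p C^{-1}$ and invoking surjectivity to produce the required unit. The paper writes that generator as $x_\p/c$ (with $C^{h_K}=(c)$) where you call it $\pi$, but this is purely a notational difference, and then both arguments feed the resulting lower bound on $|B_{1,C}(x)|$ into Theorem~\ref{main result}.
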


\begin{proof} We will prove the corollary by proving that the ideal $\p^{-1}$ is in $B_{1,C}$ if and only if $[\p]=[C]$ and $\O_K^{\times} \twoheadrightarrow (\O_K/ \p)^{\times}.$
Clearly $\p^{-1}$ can only belong to $B_{1,C}$ if $[\p]=[C].$ For the rest of the proof, assume that $[\p] = [C].$ If $C^{h_K} = (c)$, then by Lemma \ref{xpiso}, multiplication by $\frac{x_{\p}}{c}$ is an isomorphism from $\p^{-1}C/C$ to $\O_K / \p.$   By definition, we know that $\p^{-1} \in B_{1,C}$ if and only if for all $\beta \in \p^{-1}C \setminus C$, there exists some $y \in C$ such that $(\beta + y)^{-1} \p^{-1} C = \O_K.$  We can rewrite  $(\beta + y)^{-1} \p^{-1} C = \O_K$ as $\frac{x_{\p}/c}{\frac{x_{\p}}{c} \beta + \frac{x_{\p}}{c} y} \p^{-1} C = \O_K,$
which is equivalent to the condition $\frac{x_{\p}}{c} \beta + \frac{x_{\p}}{c} y \in \O_K^{\times}$, where $\frac{x_{\p}}{c} \beta \in \O_K$ and $ \frac{x_{\p}}{c} y \in \p.$  We conclude that $(\beta + y)^{-1} \p^{-1} C = \O_K$ if and only if the image of $\beta$ under multiplication by $\frac{x_{\p}}{c}$ is congruent to a unit of $\O_K$ modulo $\p.$ 
\end{proof}
In order to prove  Theorem \ref{main result}, we will first need to state the Gupta-Murty bound and the Large Sieve for ideals.

\section{The Gupta-Murty Bound}
In order to state the Gupta-Murty bound, we need the following definitions.

 Given a prime ideal $\p$, define $q_{\p}: \O_K \longrightarrow \O_K/ \p$ to be the quotient map where $\p$ is the kernel.
  Using our new notation for the quotient map, we can now define an important constant for $\O_K$ and $\p$.
\begin{defi}  Given a prime ideal $\p$, we define $f(\p):= | q_{\p}(\O_K^{\times}) |.$ 
\end{defi} \index{$f(\p)$}
The constant $f(\p)$ is the size of the image of the units under the quotient group, which is  bounded above by the size of the set of  the units in $\O_K / \p$.  The constant $f(\p)$ is a special case of the following.

\begin{defi} Suppose that $K$ is a number field and $\p$ a prime ideal of $\O_K$.  If $\mathfrak{M}$ is a monoid in $\O_K$ such that $\mathfrak{M} \cap \p = \emptyset$, then  we define $f_{\mathfrak{M}}(\p)$ to be $|  q_{\p}(\mathfrak{M}) |.$
\end{defi}  \index{$f_{\mathfrak{M}}(\p)$}
The quantity $f(\p)$ is therefore the same as $f_{\O_K^{\times}}(\p).$ 

\begin{defi}  A set of elements $x_1, \ldots, x_n$ of $K$ is \textbf{multiplicatively independent}  if the only integer $n$-tuple $(a_1, \ldots, a_n)$ satisfying $x_1^{a_1} \cdots x_n^{a_n} = 1$ is $(0, \ldots, 0).$
\end{defi}

\begin{theorem}
{\textbf{(The Gupta-Murty Bound)}}
(\cite{Gupta})
\label{Gupta-Murty} 
If $\mathfrak{M}$ is a finitely-generated monoid in $\O_K$  containing $t$ multiplicatively independent elements, then 
$$| \{\p| f_{\mathfrak{M}}(\p) \leq y \}  |  << y^{\frac{t+1}{t}},$$
where the implied constant depends on $K$, $t$, and the generators of $\mathfrak{M}.$ 
\end{theorem}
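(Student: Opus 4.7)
The plan is to extract, via Minkowski's theorem, a short multiplicative relation among the $x_i$ modulo each prime $\p$ with $f_{\mathfrak{M}}(\p) \le y$, and then count how many primes can divide the resulting elements of $\O_K$.

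First, I would fix $t$ multiplicatively independent elements $x_1,\ldots,x_t \in \mathfrak{M}$ supplied by the hypothesis. For any prime $\p$ with $\mathfrak{M}\cap\p = \emptyset$ and $f_{\mathfrak{M}}(\p) \le y$, the images $q_{\p}(x_i)$ lie in $(\O_K/\p)^{\times}$, and the subgroup they generate is contained in $q_{\p}(\mathfrak{M})$, so has order at most $y$. Hence the homomorphism $\Z^t \to (\O_K/\p)^{\times}$ sending $(n_1,\ldots,n_t) \mapsto \prod_i x_i^{n_i}$ has a kernel $\Lambda_{\p}$, a full-rank sublattice of $\Z^t$ of index at most $y$. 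Applying Minkowski's theorem to $\Lambda_{\p}$ and the cube $[-y^{1/t},y^{1/t}]^t$ of volume $2^t y \ge 2^t \det\Lambda_{\p}$ produces a nonzero $\vec{n}\in\Lambda_{\p}$ with $\max_i |n_i| \le y^{1/t}$.

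Second, I would split $\vec{n}$ into positive and negative parts by setting $A_{\vec n}:=\prod_{n_i>0} x_i^{n_i}$ and $B_{\vec n}:=\prod_{n_i<0} x_i^{-n_i}$. Multiplicative independence of $x_1,\ldots,x_t$ gives $A_{\vec n}\neq B_{\vec n}$, so $A_{\vec n}-B_{\vec n}$ is a nonzero element of $\O_K$ divisible by $\p$. Thus every prime under consideration divides one of the nonzero elements $A_{\vec n}-B_{\vec n}$ indexed by vectors $\vec n \in \Z^t$ with $0 < \max_i|n_i| \le y^{1/t}$.

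Finally, I would count. The number of admissible $\vec n$ is at most $(2y^{1/t}+1)^t \ll y$, with the constant depending only on $t$. Setting $M := \max_{i,v}|x_i|_v$ over archimedean places, the estimate $|\Nm(A_{\vec n}-B_{\vec n})| \le (2 M^{t y^{1/t}})^{[K:\Q]}$ shows each $A_{\vec n}-B_{\vec n}$ has $\ll y^{1/t}$ prime-ideal divisors in $\O_K$, with the constant depending only on $K$ and the $x_i$. Multiplying, the total is $\ll y \cdot y^{1/t} = y^{(t+1)/t}$; the finitely many primes meeting a generator of $\mathfrak{M}$ are absorbed in the implied constant. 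The main obstacle is executing the Minkowski step cleanly, namely verifying that $\Lambda_{\p}$ is genuinely full rank (which follows at once from finiteness of the image) and that the norm bound on $A_{\vec n}-B_{\vec n}$ depends only on the permitted data, so as not to inflate the crucial factor $y^{1/t}$.
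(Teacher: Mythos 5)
The paper cites this result from Gupta and Murty without proof, so there is no in-paper argument to compare against; what you have written is the standard Gupta--Murty argument, carried over to $\O_K$, and it is correct. Briefly: for a prime $\p$ with $f_{\mathfrak{M}}(\p)\le y$ you take the kernel $\Lambda_{\p}\subset\Z^t$ of the evaluation map, which has index equal to the order of the subgroup generated by the $q_{\p}(x_i)$ and hence at most $y$; Minkowski gives a nonzero $\vec{n}$ with $\max_i|n_i|\le y^{1/t}$; then $\p$ divides the nonzero algebraic integer $A_{\vec{n}}-B_{\vec{n}}$; and the count follows from the $O(y)$ admissible vectors $\vec{n}$ together with the $O(y^{1/t})$ prime-ideal divisors each $A_{\vec{n}}-B_{\vec{n}}$ can have by the archimedean norm bound. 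One small point worth making explicit when you write this up: $q_{\p}(\mathfrak{M})$ is a finite submonoid of the group $(\O_K/\p)^{\times}$ and is therefore a subgroup, which is what guarantees that the subgroup generated by the $q_{\p}(x_i)$ is contained in $q_{\p}(\mathfrak{M})$ and so has order at most $y$; this is the step that makes $\Lambda_{\p}$ full rank with index at most $y$.
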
 \index{Gupta-Murty bound}

\section{The Large Sieve for Euclidean Ideal Classes} \label{section The Large Sieve for Euclidean Ideal Classes}

The large sieve is at the heart of Harper's work on Euclidean rings.  In order to generalize his work and examine the asymptotic growth of the sets $B_{i,C}$, a generalized large sieve is needed.  Before the generalized version can be stated, however, we need the following definitions.

\begin{defi}  For each coset in the image of $\O_K^{\times}$ in $(\O_K/ \p)^{\times}$, choose one unit that maps to that coset.  Let $U(\p)$ be the collection of those units.  Note that $| U(\p) |$ is $f(\p)$.
\end{defi} \index{$f(\p)$} \index{$U(\p)$}

\begin{defi} Given $\A$, a finite set of non-associated integers, a prime ideal $\p$ and some $\alpha \in \O_K$, we define the following function
$$Z(\alpha, \p) := | \{x \in \A: x \equiv \alpha ~(\textrm{mod} ~ \p) \}|.$$
\end{defi} \index{$Z(\alpha, \p)$}

For our purposes, we want to apply the large sieve to sets of ideals, so that we look at how a finite set of ideals are distributed among the similarity classes of finite set of prime ideals, rather than how finite sets of elements are distributed among the equivalence classes of prime ideals.  We will therefore look at the function $Z(\alpha, \p,C)$ rather than $Z(\alpha, \p).$ 

\begin{defi} Suppose that $C$ is a non-zero integral ideal and that $n \in \Z^{+}$.
Let $A$ be a finite set of distinct fractional ideals $I$ in $E$, such that if $I$ and $J$ are in $A$, then $[I]=[J]$ .
If $\p$ is a prime ideal such that $[\p^{-1} ]=[IC^{-1}]$ for $I$ in $A$,  and if $\beta $ is in $\p^{-1}C $, we define 
$$Z(\beta, \p, C) := \left \{ \begin{array}{c}
|\{I \in A| \exists y \in C \text{ such that } (\beta +y)^{-1}\p^{-1}C = I\}| \\
\vspace{.1 in}
\text{ if } \beta \notin C \\
f(\p)|\{I \in A| \exists y \in C \text{ such that } (\beta +y)^{-1}\p^{-1}C = I\}|\\
 \text{ if } \beta \in C\\
\end{array}
\right .$$
\end{defi} \index{$Z(\beta, \p, C)$}

\begin{defi}  Given a finite set $A$, such that $A \subset \{I| I \in E \text{ and } [I]= [C^n]\}$, define $\A:= \{x_I| I \in A\}$.
\end{defi} \index{$\A$}

Using our notation above, we can prove the following theorem.
\begin{theorem} \label{first theo}  For $C$ a non-zero integral ideal, $\p$ a prime ideal that is relatively prime to $C$, and $\beta \in \p^{-1}C$, we have 
$$Z(\beta,\p,C) = \left \{ \begin{array}{c} 
\sum_{u \in U(\p)} Z(u\beta x_{\p}, \p) \text{ if } \beta \notin C\\
f(\p)Z(0, \p) \text{ if } \beta \in C\\
\end{array} 
\right .$$
\end{theorem}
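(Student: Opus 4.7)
The plan is to translate the fractional-ideal equation $(\beta+y)^{-1}\p^{-1}C = I$ into an element-level congruence modulo $\p$, using the generators $x_I$ and $x_\p$. The pivotal observation is that $(x_I) = I^{-1}C^n$ and $(x_\p) = \p C^{n-1}$ as principal fractional ideals, so $x_I/x_\p$ generates $I^{-1}\p^{-1}C$. Hence $(\beta+y)^{-1}\p^{-1}C = I$ is equivalent to $\beta + y = u\, x_I/x_\p$ for some $u \in \O_K^{\times}$; multiplying through by $x_\p$ rewrites this as $x_\p \beta + x_\p y = u x_I$ inside $\O_K$.

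Next I would establish both directions of the correspondence. In the forward direction, $y \in C$ together with $x_\p \in \p C^{n-1}$ gives $x_\p y \in \p C^n \subseteq \p$, so $x_\p \beta \equiv u x_I \pmod{\p}$. Conversely, given a unit $u$ with $x_\p \beta \equiv u x_I \pmod{\p}$, I set $y := u x_I/x_\p - \beta \in \p^{-1}C$ and verify $y \in C$ by observing that $x_\p y = u x_I - x_\p \beta$ lies in $C^n \cap \p = \p C^n$ (using $(\p,C)=1$) and then invoking Lemma~\ref{in C}. Since only the residue class of $u$ modulo $\p$ matters, and $U(\p)$ is a set of coset representatives for $q_\p(\O_K^{\times})$, the existence of a suitable $y \in C$ is equivalent to the existence of some $u \in U(\p)$ with $x_I \equiv u x_\p \beta \pmod{\p}$. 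The assignment $I \mapsto x_I$ is a bijection $A \to \A$, so I can count in $\A$ instead of in $A$.

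For $\beta \notin C$, Lemma~\ref{in C} gives $x_\p \beta \notin \p C^n$, hence $x_\p \beta \notin \p$. Thus if some $x \in \A$ satisfies $x \equiv u x_\p \beta \equiv u' x_\p \beta \pmod{\p}$, then $(u-u')x_\p\beta \in \p$ forces $u \equiv u' \pmod{\p}$, i.e.\ $u = u'$ in $U(\p)$. So the sets $\{x \in \A : x \equiv u x_\p \beta \pmod{\p}\}$ are pairwise disjoint as $u$ ranges over $U(\p)$, and their union has cardinality $Z(\beta,\p,C)$ by the equivalence above. Summing cardinalities yields $Z(\beta,\p,C) = \sum_{u\in U(\p)} Z(u x_\p \beta,\p)$. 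For $\beta \in C$, Lemma~\ref{in C} gives $x_\p \beta \in \p C^n \subseteq \p$, so $u x_\p \beta \equiv 0 \pmod{\p}$ for every $u$, and the condition collapses to $x_I \equiv 0 \pmod{\p}$ independently of $u$. The corresponding count is $|\{x \in \A : x \equiv 0 \pmod{\p}\}| = Z(0,\p)$, and the extra factor of $f(\p)$ baked into the definition of $Z(\beta,\p,C)$ for $\beta \in C$ produces $f(\p) Z(0,\p)$.

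The main obstacle is the converse direction, where one must produce an actual $y \in C$ (not merely $y \in \p^{-1}C$) from a congruence modulo $\p$; this is precisely what Lemma~\ref{in C} delivers once one checks that $x_\p y \in \p \cap C^n$, which forces one to use the coprimality of $\p$ and $C$ to identify $\p \cap C^n$ with $\p C^n$. The same coprimality, via Lemma~\ref{in C}, is what guarantees that $x_\p \beta \notin \p$ when $\beta \notin C$, and thus underpins the disjointness needed to convert the union into a sum.
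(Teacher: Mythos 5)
Your proof is correct and follows essentially the same route as the paper's: translate the ideal identity $(\beta+y)^{-1}\p^{-1}C = I$ into the element congruence $x_I \equiv u\beta x_\p \pmod{\p}$ via the generators $x_I$, $x_\p$, use Lemma~\ref{in C} together with coprimality of $\p$ and $C$ to convert membership of $y$ in $C$ into a congruence modulo $\p$, and then count over coset representatives $u \in U(\p)$. You do supply one detail the paper leaves implicit, namely the disjointness of the sets $\{x_I : x_I \equiv u\beta x_\p \pmod{\p}\}$ over $u \in U(\p)$ when $\beta \notin C$, which is exactly what justifies converting the union into the sum $\sum_{u\in U(\p)} Z(u\beta x_\p, \p)$.
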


\begin{proof}(Proof of Theorem \ref{first theo})  Using the elements defined above, we can rewrite the equation
$$I = (\beta + y)^{-1} \p^{-1}C$$ as 
$$x_I^{-1} C^n = (\beta + y )^{-1} x_{\p}^{-1}C^{n-1} C.$$

The above statement on ideals implies that 
$$ux_I = \beta x_{\p} + y x_{\p}  \text{ for some }u \in \O_K^{\times},$$
a statement on elements.  
Note that both $ux_I$ and $\beta x_{\p}$ are in $C^n$ and that $y x_{\p}$ is in $\p C^n$ if and only if $y$ is in $C$ by Lemma \ref{in C}.  Therefore, the statement that there exists some 
$$y \in C \text{ such that }I = (\beta + y)^{-1}\p^{-1} C$$ is equivalent to saying 
$$ux_I \equiv \beta x_{\p} ~(\textrm{mod} ~ \p C^n) \text{ for some }u \in \O_K^{\times}.$$
We know from Lemma \ref{in C} that this last condition is equivalent to 
$$x_I \equiv u' \beta x_{\p} ~(\textrm{mod} ~ \p) \text{ for some } u' \in \O_K^{\times}.$$  

Note that  $\beta$ is in $C$ if and only if 
$\beta x_{\p} \equiv 0 (\textrm{mod} ~ \p C^n)$, which implies that 
$ux_I \equiv 0 (\textrm{mod} ~ \p C^n)$, which is true if and only if 
$x_I \equiv 0 (\textrm{mod} ~ \p)$ by Lemma \ref{equiv}.   Since  there exists an element $y$ in $C$ such that $I = (\alpha + y)^{-1} \p^{-1}C$ if and only if 
$x_I \equiv 0 ~(\textrm{mod} ~ \p), $we conclude that if $\beta$ is in $C$, then 
$$Z(\beta, \p, C) = f(\p) | \{ I \in A: \exists y \in C \text{ such that } (\beta + y)^{-1}\p^{-1}C=I\}  |$$
$$= f(\p) | \{x_I \in \A: x_I \equiv 0 \, (\textrm{mod} ~ \p) \} | = f(\p) Z(\beta x_{\p}, \p).$$

If $\beta$ is not in $C$, this means there exists some $y$ in $C$ such that $I = (\beta + y)^{-1} \p^{-1}C$ if and only if $x_I \equiv u \beta x_{\p} ~(\textrm{mod} ~ \p)$ for some $u \in \O_K^{\times}.$  We conclude that if $\beta$ is not in $C$, then
$$Z(\beta, \p, C) = | \{I \in A: \exists y \in C \text{ such that } (\alpha +y)^{-1} \p^{-1}C = I \}|$$
$$= |\{x_I \in \A: x_I \equiv u \beta x_p  \text{ for some } u \in \O_K^{\times}\}|$$
$$= \sum_{u \in U(\p)} |\{x_I \in \A: x_I \equiv u \beta x_{\p} \, (\textrm{mod} ~ \p) \}|= \sum_{u \in U(\p)} Z(u\beta x_{\p}, \p).$$

\end{proof}

\begin{theorem} \label{sieve heart} If $C$ is a non-zero integral ideal; if $A$ is a finite collection of ideals, $A \subset E \cap [C^n]$; if $\p$ is a prime ideal;  and if $\p$ and $C$ are relatively prime,  then 
$$\sum_{\beta \in \p^{-1}C/C} \! \left ( \frac{Z(\beta, \p, C)}{f(\p)} - \frac{|A|}{\Nm( \p)}\right )^2 \leq \sum_{\alpha ~(\textrm{mod} ~ \p)} \! \! \left (Z(\alpha, \p) - \frac{|A|}{\Nm (\p)}\right )^2\hspace{-.1 in}.$$
\end{theorem}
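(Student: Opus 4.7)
\textbf{Proof plan for Theorem \ref{sieve heart}.} The plan is to reduce the inequality to a direct application of the Cauchy--Schwarz inequality, using Theorem \ref{first theo} to translate the statistic $Z(\beta,\p,C)$ into the classical statistic $Z(\alpha,\p)$.

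First I would identify the indexing sets on the two sides. By Lemma \ref{xpiso} together with Lemma \ref{equiv}, the assignment $\beta\mapsto \beta x_\p\pmod{\p}$ is a bijection $\p^{-1}C/C\to\O_K/\p$; by Lemma \ref{in C}, the zero coset (the cosets of $\beta\in C$) is sent to $\alpha=0$, and the remaining $\Nm(\p)-1$ cosets are matched with $(\O_K/\p)^{\times}$. I would then split the left-hand sum according to whether $\beta\in C$. The second case of Theorem \ref{first theo} shows that the $\beta\in C$ contribution equals $(Z(0,\p)-|A|/\Nm(\p))^{2}$, which matches the $\alpha=0$ term of the right-hand side exactly.

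For the remaining terms, I would set $g(\gamma):=Z(\gamma,\p)-|A|/\Nm(\p)$ for $\gamma\in(\O_K/\p)^{\times}$. The first case of Theorem \ref{first theo} then rewrites
\[\frac{Z(\beta,\p,C)}{f(\p)}-\frac{|A|}{\Nm(\p)}=\frac{1}{f(\p)}\sum_{u\in U(\p)}g(u\gamma),\]
with $\gamma=\beta x_\p\pmod{\p}$. The Cauchy--Schwarz inequality (applied to the constant weights on $U(\p)$) yields
\[\left(\frac{1}{f(\p)}\sum_{u\in U(\p)}g(u\gamma)\right)^{2}\leq \frac{1}{f(\p)}\sum_{u\in U(\p)}g(u\gamma)^{2}.\]
Summing this bound over $\gamma\in(\O_K/\p)^{\times}$ and swapping the two sums, it would remain only to verify that $\sum_{\gamma}g(u\gamma)^{2}=\sum_{\gamma}g(\gamma)^{2}$ for each fixed $u\in U(\p)$. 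This is immediate: since $q_\p(u)$ is a unit in $\O_K/\p$, the map $\gamma\mapsto u\gamma$ permutes $(\O_K/\p)^{\times}$. Once this permutation is absorbed, the outer factor $1/f(\p)$ cancels the $f(\p)$ identical inner sums, leaving exactly $\sum_{\gamma\in(\O_K/\p)^{\times}}g(\gamma)^{2}$ on the right.

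The main obstacle, as I see it, is essentially bookkeeping in the first step: one must keep careful track of the bijection $\p^{-1}C/C\cong \O_K/\p$ furnished by multiplication by $x_\p$ and confirm that the $\beta\in C$ coset pairs with $\alpha=0$ without over- or under-counting among the nonzero cosets. Once that identification is in place, the Cauchy--Schwarz step and the unit-invariance of the sum of squares make the rest of the argument routine.
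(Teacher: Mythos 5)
Your proposal is correct and follows essentially the same route as the paper's proof: split the $\beta\in C$ coset off, use Theorem \ref{first theo} to convert $Z(\beta,\p,C)$ into an average of $Z(u\beta x_\p,\p)$ over $U(\p)$, apply Cauchy--Schwarz, swap the order of summation, and use the fact that multiplication by a unit permutes $(\O_K/\p)^{\times}$ (hence leaves $\sum_\gamma g(\gamma)^2$ unchanged) together with the bijection from Lemma \ref{xpiso}. The only cosmetic difference is that you name the permutation-invariance step explicitly, whereas the paper simply notes the inner sum is independent of $u$.
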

\begin{proof}  From the above, if $\beta$ is in $C$, then 
$$ \left ( \frac{Z(\beta, \p, C)}{f(\p)} - \frac{|A|}{\Nm (\p)}\right )^2 
=\left (\frac{f(\p)Z(0, \p)}{f(\p)} - \frac{|A|}{\Nm (\p)}\right )^2,$$ which equals   

$$\left (Z(0,\p) - \frac{|A|}{\Nm (\p)}\right )^2\hspace{-.1 in}.$$

If $\beta$ is not in $C$, then   $$\left (\frac{Z(\beta, \p, C)}{f(\p)} - \frac{|A|}{\Nm (\p)} \right )^2 
= \left ( \frac{\sum_{u \in U(\p)} Z(u \beta x_{\p})}{f(\p)} - \frac{|A|}{\Nm( \p)} \right )^2\hspace{-.1 in}.$$
We can rewrite the right hand side as 
$$ \left (\sum_{u \in U(\p)} \hspace{-.1 in} \left (\frac{Z(u \beta x_{\p}, \p)}{f(\p)} - \frac{|A|}{f(\p) \Nm (\p) }\right ) \right )^2 \hspace{-.1 in},$$
which equals $$ \hspace{-.05 in} \frac{1}{(f(\p))^2}  \left (\sum_{u \in U(\p)} \hspace{-.1 in}\left (Z(u \beta x_{\p}, \p)-\frac{|A|}{\Nm( \p)} \right ) \right )^2.\hspace{-.1 in}$$

If we apply Cauchy-Schwartz, we see that the above is less than or equal to
$$ \hspace{-.25 in} \frac{1}{(f(\p))^2}   \hspace{-.05 in}\sum_{u \in U(\p)} \hspace{-.1 in} f(\p) \hspace{-.05 in} \left (Z(u \beta x_{\p}, \p)-\frac{|A|}{\Nm \p} \right )^2 \hspace{-.1 in}= \hspace{-.05 in} \frac{1}{f(\p)} \hspace{-.05 in} \sum_{u \in U(\p)} \hspace{-.1 in} \left (Z(u \beta x_{\p}, \p)-\frac{|A|}{\Nm \p} \right )^2\hspace{-.1 in}.$$ 

Summing up over all non-zero classes $\beta $ in $\p^{-1}C/C$ yields 

$$ \hspace{-.5 in} \sum_{\tiny{\begin{array}{c} \beta (\textrm{mod }  C)\\ \beta \not \equiv 0 (\textrm{mod } C) \end{array}}}
\hspace{-.3 in} \left ( \frac{Z(\beta, \p, C)}{f(\p)} - \frac{|A|}{\Nm \p} \right )^2 \hspace{-.1 in} 
\leq\frac{1}{f(\p)} \hspace{-.1 in}\sum_{\tiny{\begin{array}{c} \beta (\textrm{mod } C)\\ \beta \not \equiv 0 \end{array}}}  \hspace{-.1 in} \sum_{u \in U(\p)} \left (Z(u \beta x_{\p}, \p) - \frac{|A|}{\Nm \p} \right )^2$$

$$= \frac{1}{f(\p)} \sum_{u \in U(\p)} \hspace{-.1 in}\sum_{\tiny{\begin{array}{c} \beta (\textrm{mod }  C)\\ \beta \not \equiv 0 \end{array}}} 
\hspace{-.2 in} \left (Z(u \beta x_{\p}, \p) - \frac{|A|}{\Nm \p} \right )^2\hspace{-.1 in}.$$

The inner sum is independent of choice of $u$ so that the above is equal to 
$$\frac{1}{f(\p)} f(\p) \! \! \! \! \! \! \sum_{\tiny{\begin{array}{c} \beta (\textrm{mod } C)\\ \beta \not \equiv 0 \end{array}}} \! \! \! \!  \left (Z( \beta x_{\p}, \p) - \frac{|A|}{\Nm (\p)} \right )^2\hspace{-.1 in},$$
which can be further simplified to $$\sum_{\tiny{\begin{array}{c} \beta (\textrm{mod } C)\\ \beta \not \equiv 0 \end{array}}} \hspace{-.2 in} \left (Z(\beta x_{\p}, \p) - \frac{|A|}{\Nm (\p)} \right )^2\hspace{-.1 in} =
\hspace{-.3 in} \sum_{\tiny{\begin{array}{c} \alpha (\textrm{mod }\p),\\ \alpha \not \equiv 0 \end{array}}} \hspace{-.2 in} \left (Z(\alpha, \p) - \frac{|A|}{\Nm (\p)} \right )^2$$ by Lemma \ref{xpiso}.
Finally, by considering both cases at once, we get 
$$\sum_{\beta \in \p^{-1}C ~(\textrm{mod} ~ C)} \hspace{-.1 in}\left ( \frac{Z(\beta, \p, C)}{f(\p)} - \frac{|A|}{\Nm (\p)} \right )^2 \hspace{-.1 in}\leq \hspace{-.1 in} \sum_{\alpha (\textrm{mod } \p)} \hspace{-.1 in}\left (Z(\alpha, \p) - \frac{|A|}{\Nm (\p)} \right )^2\hspace{-.1 in}.$$
\end{proof}

\begin{theorem}{(\textbf{Large Sieve with Respect to $C$})}\label{Large Sieve with Respect to $C$}   \index{Large Sieve with Respect to $C$} 
\newline Suppose that $A$ and $P$ are  finite sets of fractional ideals, with $A \subset E \cap [C^n]$ and $P \subset \{\p: \p \text{ is prime, } [\p^{-1}]=[C^{n-1}]\}.$ If $X = \max_{I \in A} \Nm (I^{-1})$ and  $Q = \max_{\p^{-1} \in P} \Nm (\p)$, then
 
$$\sum_{\p \in P} \Nm (\p) \hspace{-.1 in}\sum_{\beta \in \p^{-1}C/C } \left (\frac{Z(\beta, \p, C)}{f(\p)} - \frac{|A|}{\Nm (\p)}\right )^2 \hspace{-.1 in} << (Q^2 + X) |A |.$$  The implied constant depends only on $K$, the ideal $C$, and on $n$.  
\end{theorem}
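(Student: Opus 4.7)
The plan is to combine Theorem \ref{sieve heart} with the classical large sieve inequality for algebraic integers, applied to the natural sifting set $\A = \{x_I : I \in A\}$. First I would apply Theorem \ref{sieve heart} pointwise: for each prime $\p \in P$, it bounds the inner sum over $\beta \in \p^{-1}C/C$ by the usual sum over $\alpha \pmod{\p}$. Multiplying by $\Nm(\p)$ and summing over $\p \in P$ reduces the target inequality to the classical-looking estimate
$$\sum_{\p \in P} \Nm(\p) \sum_{\alpha \pmod{\p}} \left(Z(\alpha, \p) - \frac{|A|}{\Nm(\p)}\right)^2 \ll (Q^2 + X)|A|.$$

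Next I would identify the appropriate sifting set. The map $I \mapsto x_I$ is injective, since distinct $I$ produce distinct principal ideals $(x_I) = I^{-1}C^n$; hence $|\A| = |A|$. Because $C$ is integral, $(x_I) \subseteq C^n \subseteq \O_K$, so each $x_I$ is an algebraic integer, and
$$|\Nm(x_I)| = \Nm(I^{-1})\, \Nm(C)^n \leq \Nm(C)^n\, X.$$
Thus $\A$ is a finite set of non-associate integers of norm $\ll_{C,n} X$, which is precisely the set-up for the large sieve inequality for number fields (in the Huxley--Wilson form used in Harper's dissertation). That inequality then yields
$$\sum_{\Nm(\p) \leq Q} \Nm(\p) \sum_{\alpha \pmod{\p}} \left(Z(\alpha, \p) - \frac{|\A|}{\Nm(\p)}\right)^2 \ll (Q^2 + X)|\A|$$
with implied constant depending only on $K$, $C$, and $n$. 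Restricting the outer sum to $\p \in P$ cannot enlarge the left-hand side, so combining with the first step completes the proof.

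The main obstacle I expect is the bookkeeping at the interface between ideals and elements: one must verify that $I \mapsto x_I$ is injective, that each $x_I$ is a genuine algebraic integer (which is exactly why the hypothesis that $C$ is integral was imposed), and that the ideal-theoretic parameter $X = \max_{I\in A}\Nm(I^{-1})$ controls the archimedean size input that the classical large sieve requires. All of this is routine given the groundwork of Section 2 and Theorem \ref{sieve heart}, so no genuinely new analytic input should be needed beyond invoking the standard large sieve for $\O_K$.
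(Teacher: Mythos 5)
Your proposal follows essentially the same route as the paper's proof: apply Theorem \ref{sieve heart} termwise, pass to the sifting set $\A = \{x_I : I \in A\}$, bound its norms by $\Nm(C)^n X$, invoke the classical large sieve for $\O_K$, and absorb $\Nm(C)^n$ into the implied constant. The only cosmetic difference is that you make explicit the injectivity of $I \mapsto x_I$ and the integrality of the $x_I$'s, which the paper leaves implicit.
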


\begin{proof}  We know from Theorem \ref{sieve heart} that 
  $$\sum_{\beta \in \p^{-1}C /C} \hspace{-.1 in}\left ( \frac{Z(\beta, \p, C)}{f(\p)} - \frac{|A|}{\Nm (\p)} \right )^2  \hspace{-.05 in}\leq \hspace{-.15 in} \sum_{\alpha (\textrm{mod} ~ \p)} \! \!  \left (Z(\alpha, \p) - \frac{|A|}{\Nm (\p)}\right )^2.$$
This means that 
$$\sum_{\p \in P} \Nm (\p)  \hspace{-.1 in}\sum_{\beta \in \p^{-1}C /C} \hspace{-.1 in}\left ( \frac{Z(\beta, \p, C)}{f(\p)} - \frac{|A|}{\Nm (\p)}\right)^2 $$
$$\leq \sum_{\p \in P} \Nm (\p) \hspace{-.2 in} \sum_{\alpha (\textrm{mod} ~ \p)} \hspace{-.1 in}\left ( Z(\alpha, \p) - \frac{|A|}{\Nm (\p)} \right )^2\hspace{-.1 in}.$$
The maximum norm of any element in $\A$ is $$\text{max}_{x \in \A} \Nm (x) = \text{max}_{I \in A} \Nm (x_I) = \Nm (C^n) X.$$ 
Applying the large sieve, we know that 
$$\sum_{\p \in P} \Nm( \p) \hspace{-.1 in} \sum_{\alpha (\textrm{mod} ~ \p)} \hspace{-.1 in} \left (Z(\alpha, \p) - \frac{|A|}{\Nm (\p)}\right)^2 <<(Q^2 + \Nm(C)^n X)|\A |.$$
The sizes of $A$ and $\A$ are the same, so that  
 $$\sum_{\p \in P} \Nm (\p)\hspace{-.2 in} \sum_{\alpha (\textrm{mod} ~ \p)} \hspace{-.1 in} \left (Z(\alpha, \p) - \frac{|A|}{\Nm (\p)}\right )^2 << (Q^2 + \Nm(C)^nX)|A|,$$ where the implied constant only depends on the number field $K$.   
When we put this together with the earlier inequality, we see that 
$$\sum_{\p \in P} \Nm (\p) \hspace{-.1 in} \sum_{\beta \in \p^{-1}C /C} \hspace{-.1 in} \left ( \frac{Z(\beta, \p, C)}{f(\p)} - \frac{|A|}{\Nm (\p)}\right )^2  << (Q^2 + X)|A|,$$ with the implied constant now depending on both the choice of number field $K$, $C$, and $n$.

\end{proof}  

Harper did not use the large sieve in his paper, so much as a corollary of the large sieve.  In order to state our version of the corollary, we need the following definition.

\begin{defi}Let $\omega(\p):=~|  \{[\alpha] \in \p ^{-1}C/C | Z(\alpha, \p,C)=0\} |.$
\end{defi} \index{$\omega(\p)$}

\begin{coro}\label{newsievecoro} Let $A$ and $P$ be  finite sets of fractional ideals, with $A \subset E \cap [C^n]$ and $P \subset \{\p| \p \text{ is prime, } [\p^{-1}]=[C^{n-1}] \}.$ If $X = \max_{I \in A} \Nm (I^{-1})$ and  $Q = \max_{\p^{-1} \in P} \Nm (\p)$, then
$$\sum_{\p \in P} \frac{\omega(\p)}{\Nm(\p)} << \frac{Q^2 + X}{|A|},$$
where the implied constant depends only on $K$, $C$, and $n$.
\end{coro}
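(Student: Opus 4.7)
The plan is to derive Corollary \ref{newsievecoro} as a direct consequence of the Large Sieve with Respect to $C$ (Theorem \ref{Large Sieve with Respect to $C$}) by restricting attention to the classes $\beta \pmod{C}$ where the count $Z(\beta,\p,C)$ vanishes. The key observation is that each such vanishing class contributes a large, easily computable amount to the variance, so that the vanishing classes alone already saturate the sieve bound.

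More concretely, I would start from the conclusion of Theorem \ref{Large Sieve with Respect to $C$}, namely
$$\sum_{\p \in P} \Nm(\p) \sum_{\beta \in \p^{-1}C / C} \left(\frac{Z(\beta,\p,C)}{f(\p)} - \frac{|A|}{\Nm(\p)}\right)^2 \ll (Q^2 + X)|A|,$$
and drop all terms except those indexed by classes $[\beta] \in \p^{-1}C/C$ for which $Z(\beta,\p,C) = 0$. For such a $\beta$ the summand reduces to $\left(\frac{|A|}{\Nm(\p)}\right)^{2} = \frac{|A|^2}{\Nm(\p)^2}$, and by definition there are exactly $\omega(\p)$ such classes. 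Thus the left-hand side is bounded below by
$$\sum_{\p \in P} \Nm(\p) \cdot \omega(\p) \cdot \frac{|A|^2}{\Nm(\p)^2} = |A|^2 \sum_{\p \in P} \frac{\omega(\p)}{\Nm(\p)}.$$

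Combining the lower bound with the upper bound from the large sieve gives
$$|A|^2 \sum_{\p \in P} \frac{\omega(\p)}{\Nm(\p)} \ll (Q^2 + X)|A|,$$
and dividing through by $|A|^2$ yields the desired inequality
$$\sum_{\p \in P} \frac{\omega(\p)}{\Nm(\p)} \ll \frac{Q^2 + X}{|A|},$$
with implied constant inherited from Theorem \ref{Large Sieve with Respect to $C$} and so depending only on $K$, $C$, and $n$. There is no real obstacle: the whole argument is a standard sieve trick of throwing away the nonnegative off-diagonal contributions, and the only thing to check is the elementary identity for the variance term when $Z(\beta,\p,C)=0$, together with the nonnegativity of the discarded terms, which is automatic since they are squares.
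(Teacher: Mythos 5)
Your proposal is correct and follows essentially the same route as the paper: both drop the nonnegative terms with $Z(\beta,\p,C)\neq 0$, observe that each of the $\omega(\p)$ vanishing classes contributes $|A|^2/\Nm(\p)^2$ to the inner variance sum, and then divide through by $|A|^2$. The only minor difference is cosmetic --- the paper's write-up omits the $f(\p)$ in the displayed variance term, but the underlying computation is the one you give.
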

\begin{proof}  We know from Theorem \ref{Large Sieve with Respect to $C$} that 
$$\sum_{\beta \in \p^{-1}C/C} \left (Z(\beta,\p,C)-\frac{| A |}{\Nm(\p)} \right)^2 $$
$$\geq \sum_{ \left  \{ \tiny{ \begin{array}{c}\beta \in \p^{-1}C/C\\ Z(\alpha, \p,C) =0 \end{array}} \right \} } 
\left (Z(\alpha,\p,C)-\frac{|A|}{\Nm(\p)} \right)^2=\frac{|A |^2 \omega(\p)}{\Nm(\p)^2}.$$  
Since the quantity $\frac{|A|^2 \omega(\p)}{\Nm(\p)^2}$ is less or equal to than 
$$\sum_{\beta \in \p^{-1}C /C} \left (Z(\alpha,\p,C)-\frac{|A|}{\Nm(\p)} \right)^2 ,$$ we know that 

$$\sum_{\p \in P} \Nm (\p) \frac{|A|^2\omega(\p)}{\Nm(\p)^2} << (Q^2 + X)|A|$$
and therefore $$\sum_{\p \in P} \frac{|A|^2 \omega(\p)}{\Nm(\p)} <<(Q^2 +X)|A |.$$
We conclude that $$\sum_{\p \in P} \frac{\omega(\p)}{\Nm(\p)} << \frac{Q^2 + X}{|A|}.$$
\end{proof}

\section{Proof of Theorem \ref{main result}}
\begin{proof}(Proof of Theorem \ref{main result}) First we shall prove that if 

$|B_{1,C}(x)|  \gg \frac{x}{\log^2 x},$ then
$| B_{2,C}(x)| \sim \frac{x}{h_K \log x}.$  We shall define $A$ to be the set $B_{1,C}(x^2)$, define $X \leq x^2$, and   $P$ to be the set $B_{2,C} ^c (x)$, so  that $Q \leq x$.  By Corollary \ref{newsievecoro}, we know that
$$\sum _{\p \in P} \frac{\omega(\p)}{\Nm(\p)}<< \frac{Q^2 + X}{|A|} << \frac{2x^2}{\frac{x^2}{\log^2 (x^2)}} = \frac{2x^2}{\frac{x^2}{2 \log^2(x)}}<< \log^2(x).$$ 
Note that if $I$ is in $B_{1,C} ^c$, then there is some $x $ in $ IC \setminus C$ such that there is no $y $ in $ C$ such that $(x +y)^{-1}IC =\O_K^{\times}$.  Given any $u \in \O_K ^{\times}$, we know that $(ux + uy)^{-1}IC =  (x+y)^{-1}IC$ and that $uy$ is in $C$.  Therefore, if $Z(x, \p^{-1}, C)$ is zero, then so is $Z(ux, \p^{-1}, C)$ for any unit $u$.  
This means that if there is one $x$ in $\p^{-1}C \setminus C$ such that $Z(x, \p) = 0$, then there are at least $f(\p)$ cosets with elements $e$ such that $Z(e, \p) = 0$, and the function $\omega(\p)$ is divisible by $ f(\p)$.  

If $\O_K$ has infinitely many units, then 
$$|\{ \p | \Nm( \p) \leq x \text{ and }f(\p) \leq\ \Nm( \p)^{\frac{1}{2} - \epsilon } \}|<< x^{1 - 2 \epsilon}$$ by Theorem \ref{Gupta-Murty}
and therefore 
$$|\{ \p | \Nm( \p) \leq x \text{ and }f(\p) \leq \Nm( \p)^{\frac{1}{2} - \epsilon } \}| = o\left (\frac{x}{\log x}\right ).$$
The above estimate implies that
$$\sum_{\tiny{\begin{array}{c}
\p^{-1} \in P(x)\\
 f(\p) > \Nm(\p)^{\frac{1}{2} - \epsilon} \end{array}}}
    \frac{\omega(\p)}{\Nm(\p)} \geq 
 \sum_{\tiny{\begin{array}{c}\p^{-1} \in P^c(x)\\ f(\p) > \Nm(\p)^{\frac{1}{2} - \epsilon} \end{array}}}
\frac{f(\p)}{\Nm(\p)}$$
$$ >\frac{| \{ \p \in B_{2,C}^c: f(\p) > \Nm(\p)^{\frac{1}{2} - \epsilon} \} |}{x^{\frac{1}{2} + \epsilon}}.$$  
By combining this with the first bound in the proof, we know that
$$\log^2(x) \gg \frac{|\{ \p \in B_{2,C}^c: f(\p) > \Nm(\p)^{\frac{1}{2} - \epsilon} \}|}{x^{\frac{1}{2} + \epsilon}}.$$  
Multiplying both sides by $x^{\frac{1}{2} + \epsilon}$  yields 
$$| \{ \p \in B_{2,C}^c: f(\p) > \Nm(\p)^{\frac{1}{2} - \epsilon} \}| = o\left (\frac{x}{\log(x)}\right ).$$  
Since the size of $B_{2,C}^c(x)$ is equal to 
$$|\{\p \in B_{2,C}^c(x)| f(\p) > \Nm(\p) ^{\frac{1}{2} - \epsilon}\}| + 
|\{ \p \in B_{2,C}^c(x)| f(\p) \leq \Nm(\p)^{\frac{1}{2} - \epsilon} \}|,$$ 
we know the size of $B_{2,C}^c(x)$ is less than or equal to
$$|\{\p \in B_{2,c}^c(x)| f(\p) > \Nm(\p) ^{\frac{1}{2} - \epsilon}\}| + |\{ \p | \Nm(\p) \leq x, f(\p) \leq \Nm(\p)^{\frac{1}{2} - \epsilon} \}|$$ 
and can thus conclude $|B_{2,C}^c(x)|= o \left ( \frac{x}{\log(x)}\right)$
 and $$|B_{2,C} (x)| \sim \frac{x}{h_K \log x}.$$

Suppose that $\p$ is prime, that $[\p] = [C^{n+2}]$, and that $x $ is an element of $\p^{-1}C \setminus C.$ 
There is a positive density of prime ideals $\mathfrak{q}$ such that $\mathfrak{q}^{-1} = (x+y)^{-1} \p^{-1}C$ for some $y$ in $C$ by Theorem \ref{Dirichlet ideal class}.  The density of primes in $[C^{n+1}]$ in the set of all primes is $\frac{1}{h_K}$, which is the same as the density of primes $\mathfrak{q}$ such that $\mathfrak{q}^{-1}$ is in $B_{2, C}$, so there exists some prime $\mathfrak{q}$ in $B_{2,C}$ such that $\q^{-1} = (x+y)^{-1}\p^{-1}C$ for some $y$ in $C$.  We conclude that $\p^{-1}$ is in $B_{3, C}.$  
\end{proof}
\textbf{Acknowledgments} This paper grew out of the author's dissertation, which would not be completed were it not for the author's advisor, Nick Ramsey, who supported and encouraged her.  The author would like to thank E. Hunter Brooks, Chris Hall, Malcolm Harper, Jeffery Lagarias, and Chris Skinner for their help.

\end{document}